\newcommand\textcyr[1]{{\fontencoding{OT2}\selectfont #1}}
\numberwithin{equation}{part}
\theoremstyle{plain}
\newtheorem{theorem}{Theorem}[part]
\newtheorem{corollary}[theorem]{Corollary}
\newtheorem{lemma}[theorem]{Lemma}
\newtheorem{proposition}[theorem]{Proposition}
\theoremstyle{definition}
\newtheorem{definition}[theorem]{Definition}
\newtheorem{remark}[theorem]{Remark}
\DeclarePairedDelimiter\ceil{\lceil}{\rceil}
\DeclarePairedDelimiter\floor{\lfloor}{\rfloor}
\newcommand{\ZZ}{\mathbb{Z}}
\newcommand{\bO}{\mathrm{O}}
\newcommand{\cC}{\mathcal{C}}
\newcommand{\Pp}{\mathcal{P}}
\title{Single self-insterction words on the once-punctured torus and their counting}
\author{David Fisac}
\address{David Fisac, DMATH, University of Luxembourg, Esch-sur-Alzette, Luxembourg \&  Universitat Aut\`onoma de Barcelona, Barcelona, Spain.}
\email{david.fisac-camara@cnrs.fr}
\author{Mingkun Liu}
\address{Mingkun Liu, DMATH, University of Luxembourg, Esch-sur-Alzette, Luxembourg.}
\email{mingkun.liu@math.univ-paris13.fr}
\keywords{}
\subjclass{Primary: 57K20. Secondary: 05A05, 68R15}
\thanks{DF is supported by the Luxembourg National Research Fund PRIDE17/1224660/GPS and acknowledges support by the FEDER/AEI/MICINN grant PID2021-125625NB-I00 “Estructuras y Desigualdades Geométricas Universales” and the AGAUR grant 2021-SGR-01015.\\
\indent ML is supported by the Luxembourg National Research Fund OPEN grant O19/13865598.}
\begin{document}
\setstretch{1.15}

\begin{abstract}
We classify closed curves on a once-punctured torus with a single self-intersection from a combinatorial perspective.
We then use the classification to determine the number of closed curves with given word-length and with zero, one, and arbitrary self-intersections.
\end{abstract}

\maketitle

\part{Introduction}

Let $\Sigma_{1,1}$ be a (topological) torus with a puncture.
Free homotopy classes of oriented closed curves on $\Sigma_{1,1}$ naturally correspond to conjugacy classes of the fundamental group $\pi_1(\Sigma_{1,1})$.
We will tactically identify each free homotopy class of closed curves with its representatives, shifting freely between these two viewpoints, and henceforth referring to both as \emph{curves}.
Let $\gamma$ be a curve on $\Sigma_{1,1}$.
We say that $\gamma$ is \emph{essential} if it is not freely homotopic to a point or a loop around the puncture.
We denote by $\cC(\Sigma_{1,1})$ the set of free homotopy classes of essential closed curves on $\Sigma_{1,1}$, and $\cC^*(\Sigma_{1,1})$ the set of all free homotopy classes of curves on $\Sigma_{1,1}$.

\begin{figure}[ht]
    \begin{overpic}[width=.4\linewidth,keepaspectratio]{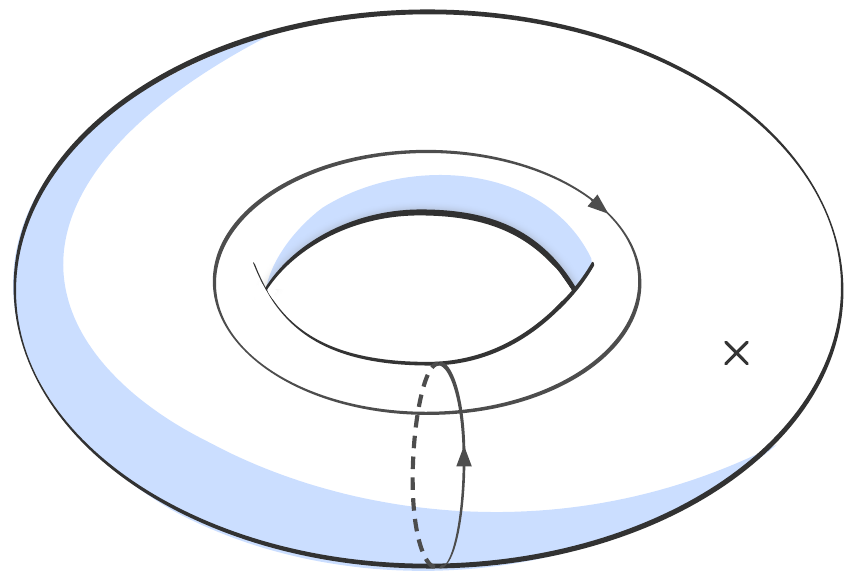}
    \put(56,12){$a$}
    \put(69,47){$b$}
    \end{overpic}
    \caption{A once-punctured torus}
    \label{fig:punctor}
\end{figure}

 We say that $\gamma$ is \emph{primitive} if its free homotopy class is not a proper power of another class, and we denote by $\Pp\cC(\Sigma_{1,1}) \subset \cC(\Sigma_{1,1})$ the set of primitive essential curves, and $\Pp\cC^*(\Sigma_{1,1})\subset \cC^*(\Sigma_{1,1})$ the set of all primitive curves.
The (geometric) intersection number of $\gamma$, denoted by $i(\gamma)$, is the minimum number of self-intersections among all curves within its free homotopy class:
\[
i(\gamma)
\coloneqq
\frac{1}{2} \min_{c}\{(t_1,t_2)\in \mathbb{S}^1\times \mathbb{S}^1\mid c(t_1)=c(t_2)\}
\]
where $c$ runs over the set of closed curves on $\Sigma_{1,1}$ in the free homotopy class $\gamma$.
Curves with $0$ self-intersection are commonly referred to as \emph{simple} curves.

The fundamental group $\pi_1(\Sigma_{1,1})$ is a free group of rank $2$.
By choosing a pair $(a,b)$ of generators for the fundamental group (see Figure~\ref{fig:punctor} for an example), we can identify $\pi_1(\Sigma_{1,1})$ with the free group $\mathbf{F}_2$ generated by $a$ and $b$.
A \emph{word} is a finite sequence $(x_i)_{i=1}^n$ such that $x_i \in \{ a,b,a^{-1},b^{-1} \}$ for all $1 \leq i \leq n$.
A \emph{circular word} can be seen as a word written on a circle, or more formally, an equivalence class of words where two words are equivalent if they differ by a circular shift.
A word $(x_i)_i^n$ is called \emph{reduced} if $x_{i+1} \neq x_{i}^{-1}$ for all $i = 1, \dots, n-1$, and $x_1 \neq x_n^{-1}$.
A circular word is \emph{reduced} if one (or all) of its representatives are reduced.
In this note, we consider only reduced (circular) words.
Words can be seen as elements in $\mathbf{F}_2$, and circular words can be seen as conjugacy classes in $\mathbf{F}_2$.
Curves in $\cC(\Sigma_{1,1})$ are in one-to-one correspondence with reduced circular words.
We say that a circular word is \emph{primitive} if none of its representatives is a proper power of another word.
Primitive curves correspond to primitive reduced circular words.

We say that a curve $\gamma\in\cC(\Sigma_{1,1})$ has \emph{word length} $n$, denoted by $\ell_\omega(\gamma)=n$, if it can be represented by a reduced word of $n$ letter.

\section{Results}

The results will appear in the order they will be proved during the article. The first one is a well-known result as a consequence of the homotopy-homology correspondence on the once-punctured torus. We will give a new proof of combinatorial nature, whose ingredients are useful later to prove a higher intersection case. 

\begin{theorem} \label{mainth1}
For any $L\in\ZZ_{\geq 4}$, we have
\[
| \{ \gamma \in \Pp\cC(\Sigma_{1,1})
\mid
i(\gamma) = 0, \, \ell_\omega(\gamma) = L \} |
=
4 \varphi(L),
\]
where $\varphi$ stands for Euler's totient function. 
\end{theorem}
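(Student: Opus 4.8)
The plan is to reduce the count to an elementary lattice-point problem by setting up a length-preserving bijection between simple primitive curves and primitive homology classes. Writing $[\gamma]=(p,q)\in\ZZ^2$ for the homology class of $\gamma$, where $p$ and $q$ are the exponent sums of $a$ and $b$ in any representing reduced circular word, I would establish the bijection
\[
\{\gamma\in\Pp\cC(\Sigma_{1,1}) \mid i(\gamma)=0,\ \ell_\omega(\gamma)=L\}
\;\longleftrightarrow\;
\{(p,q)\in\ZZ^2 \mid \gcd(p,q)=1,\ |p|+|q|=L\}.
\]
Two combinatorial inputs make this work: a criterion deciding which reduced circular words are simple, and the identity $\ell_\omega(\gamma)=|p|+|q|$ on simple words. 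Granting both, the theorem becomes a totient computation.

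The combinatorial heart, and the main obstacle, is to characterize simplicity purely at the level of words. I would first show that a simple reduced circular word has each letter type occurring with a single sign, so that it is supported on exactly one of the four alphabets $\{a,b\}$, $\{a,b^{-1}\}$, $\{a^{-1},b\}$, $\{a^{-1},b^{-1}\}$: a cyclic occurrence of both $a$ and $a^{-1}$ (or of both $b$ and $b^{-1}$) forces two strands that cannot be pulled apart, producing an unavoidable self-intersection. This partitions the curves into four \emph{sign sectors} that are permuted simply transitively by the involutions $a\mapsto a^{-1}$ and $b\mapsto b^{-1}$ (each induced by a homeomorphism of $\Sigma_{1,1}$, hence preserving length and simplicity). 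It therefore suffices to enumerate simple words in the positive sector $\{a,b\}$ and multiply by $4$. Inside this sector I would prove that a positive reduced circular word with $p$ letters $a$ and $q$ letters $b$ is simple if and only if it is \emph{balanced} (a cyclic Sturmian, or Christoffel, word), and that for each coprime pair $(p,q)$ there is exactly one such circular word.

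Granting the characterization, the enumeration is pure number theory. A simple curve in the positive sector of word length $L$ is recorded by a pair $(p,q)$ with $p,q\ge 1$ and $p+q=L$, its primitivity corresponding to $\gcd(p,q)=1$; the hypothesis $L\ge 4$ ensures $p,q\ge 1$ and excludes the degenerate classes of $a$ and $b$. Since $\gcd(p,q)=\gcd(p,L-p)=\gcd(p,L)$, the admissible $p$ are exactly the integers in $\{1,\dots,L-1\}$ coprime to $L$, of which there are $\varphi(L)$. As the four sectors are disjoint and each contributes $\varphi(L)$, the total is $4\varphi(L)$.

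The step I expect to fight with is precisely the simplicity criterion: giving a self-contained combinatorial proof---say through a minimal-position count of strands in an explicit disk-with-arcs model of $\Sigma_{1,1}$---of both the constant-sign reduction and the equivalence ``simple $\iff$ balanced,'' together with the uniqueness of the balanced circular word for each coprime $(p,q)$. One must also verify there is no overcounting, i.e.\ that distinct coprime pairs and distinct sectors give distinct free homotopy classes; this is immediate once the homology class $(p,q)$ is read off, since it is an invariant of the circular word and separates all the cases.
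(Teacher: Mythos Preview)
Your approach is correct and is essentially the classical ``homotopy--homology'' proof that the paper alludes to in the introduction and then deliberately bypasses. You parametrize primitive simple curves directly by primitive lattice points $(p,q)\in\ZZ^2$ with $|p|+|q|=L$ and count coprime pairs; the paper instead parametrizes simple curves of general type by the exponent-necklace profile $(m,x,y)$, proves existence and uniqueness of the small-variation necklace for each profile (Proposition~\ref{prop:EU}), counts \emph{all} such necklaces of weight $L$ via the Diophantine equation $x(m+1)+y(m+2)=L$ (Proposition~\ref{prop:=}) to get $M(L)=4(L-1)$ multicurves, and only then extracts the primitive count $4\varphi(L)$ by M\"obius inversion. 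The combinatorial core---uniqueness of the balanced (Sturmian/Christoffel) circular word for each coprime $(p,q)$---is common to both routes, but your counting step is shorter and avoids the inversion. The paper's longer detour is intentional: the profile parametrization and the reduction operator $A$ on necklaces are precisely the tools recycled in Part~\ref{section:si1} for the self-intersection-one case, where no homology shortcut exists.

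One caution on your constant-sign reduction: as stated it is not literally true without the word \emph{essential}, since the puncture loop $aba^{-1}b^{-1}$ is simple yet uses both signs of $a$ and of $b$. You are already restricting to $\Pp\cC(\Sigma_{1,1})$, which excludes it, but the heuristic ``both $a$ and $a^{-1}$ force an unavoidable self-intersection'' needs to acknowledge this exception; in practice one simply invokes Buser--Semmler (Theorem~\ref{thm:bus}) here rather than arguing from scratch.
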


\begin{corollary}
For $L\in\ZZ_{\geq 4}$, we have 
\[
| \{ \gamma \in \Pp\cC(\Sigma_{1,1})
\mid
i(\gamma) = 0, \, \ell_\omega(\gamma) \leq L \} |
=
4\Phi(L)+2,
\quad
\text{where}
\quad
\Phi(L)
\coloneqq
\sum_{n=1}^L \varphi(n),
    \]
and asymptotically,
\[
| \{ \gamma \in \mathcal{PC}(\Sigma_{1,1})
\mid
i(\gamma) = 0, \, \ell_\omega(\gamma) \leq L \} |
=
\frac{12}{\pi^2} \, L^2 + \bO(L(\log L)^\frac{2}{3}(\log\log L)^\frac{4}{3}).
\]
\end{corollary}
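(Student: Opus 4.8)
The plan is to obtain the exact formula by summing Theorem~\ref{mainth1} over word lengths and then to extract the asymptotics from the classical estimate for the summatory totient function. Set
\[
N(L) \coloneqq | \{ \gamma \in \Pp\cC(\Sigma_{1,1}) \mid i(\gamma) = 0, \ \ell_\omega(\gamma) \leq L \} |,
\]
and let $p(\ell)$ denote the number of such curves of word length exactly $\ell$, so that $N(L) = N(3) + \sum_{\ell=4}^{L} p(\ell)$. For $\ell \geq 4$ Theorem~\ref{mainth1} gives $p(\ell) = 4\varphi(\ell)$, whence for every $L \geq 4$
\[
N(L) = N(3) + \sum_{\ell=4}^{L} 4\varphi(\ell) = N(3) + 4\bigl(\Phi(L) - \Phi(3)\bigr) = N(3) - 16 + 4\Phi(L),
\]
using $\Phi(3) = \varphi(1)+\varphi(2)+\varphi(3) = 4$. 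Thus the entire first assertion reduces to determining the single constant coming from the short curves to which Theorem~\ref{mainth1} does not apply.

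First I would carry out an explicit census of the low-complexity curves by hand. Abelianization forces $\ell_\omega(\gamma) \geq |p| + |q|$ for a curve of homology class $(p,q)$, and by the homotopy–homology correspondence on $\Sigma_{1,1}$ a simple closed curve is determined by its primitive homology class; hence the primitive simple curves of word length $\leq 3$ are exactly those carried by the primitive classes with $|p|+|q| \leq 3$, namely the cyclic words $a^{\pm 1}, b^{\pm 1}$, then $a^{\pm 1}b^{\pm 1}$, then $a^{\pm 2}b^{\pm 1}$ and $a^{\pm 1}b^{\pm 2}$. One checks that each is reduced and primitive and that no further reduced primitive cyclic word of length $\leq 3$ occurs. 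Combining this list with the finitely many exceptional primitive curves that the hypothesis $L \geq 4$ isolates — those first contributing at word length $4$, and hence producing a constant offset for all $L \geq 4$ — pins down the additive constant and yields $N(L) = 4\Phi(L) + 2$. The only delicate point here is purely combinatorial: ensuring the short list is complete and that nothing is double-counted against the families of Theorem~\ref{mainth1}.

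Finally, the asymptotic statement follows by inserting into $N(L) = 4\Phi(L) + 2$ the sharp estimate for the totient summatory function. The elementary Mertens bound already gives $\Phi(L) = \tfrac{3}{\pi^2} L^2 + \bO(L\log L)$, and Walfisz's refinement (obtained via Vinogradov-type bounds on the relevant exponential sums) improves the error to
\[
\Phi(L) = \frac{3}{\pi^2}\, L^2 + \bO\!\left(L(\log L)^{\frac{2}{3}}(\log\log L)^{\frac{4}{3}}\right).
\]
Multiplying by $4$ reproduces the leading term $\tfrac{12}{\pi^2}L^2$, the additive constant $2$ is absorbed into the error, and the stated estimate follows at once. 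The hard part of this corollary is therefore not conceptual: the topology and counting are entirely carried by Theorem~\ref{mainth1}, and the main obstacle is the careful bookkeeping that fixes the base constant $+2$, together with a correct appeal to the optimal error term in the totient-sum asymptotic.
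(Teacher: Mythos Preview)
Your overall strategy---sum Theorem~\ref{mainth1} over word lengths, treat the short curves by hand, and then invoke Walfisz's estimate for $\Phi(L)$---is precisely what the paper intends: its proof of Theorem~\ref{mainth1} already records the partial sum $\sum_{n=1}^{L} P(n) = 4\Phi(L) - 4$ for curves of general type and then adjoins the four length-$1$ curves $a^{\pm1}, b^{\pm1}$. The asymptotic step via Walfisz is also exactly what the paper appeals to.

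The gap is in your bookkeeping of the constant. Your census of primitive simple curves of word length $\leq 3$ yields $4+4+8=16$ curves, so your own formula $N(L) = N(3) - 16 + 4\Phi(L)$ produces $4\Phi(L)$, not $4\Phi(L)+2$. You then try to recover the missing $+2$ via ``exceptional primitive curves \dots\ first contributing at word length $4$,'' but this is inconsistent with what you are summing: Theorem~\ref{mainth1} asserts that for $L=4$ there are \emph{exactly} $4\varphi(4)=8$ primitive simple curves in $\Pp\cC(\Sigma_{1,1})$, so there is no room for extra length-$4$ curves inside $\Pp\cC$. The only simple primitive length-$4$ words not already accounted for are the peripheral loop $aba^{-1}b^{-1}$ and its inverse, and these are excluded from $\Pp\cC$ by the essentiality hypothesis. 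If the $+2$ is meant to count those two curves you must say so explicitly and reconcile it with the definition of $\Pp\cC$; as written, the one step you yourself flag as ``the only delicate point'' is left unresolved, and the argument does not actually establish the stated constant.
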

This estimate for the error term is the best known and is due to Walfisz, for further study on this see \cite[Theorem~1.3]{LemkeOliver-Soundararajan}.

A simple multicurve is a formal sum of pairwise non-homotopic disjoint simple curves with positive integer coefficients.
On $\Sigma_{1,1}$, the set of multicurves is nothing but $\cC(\Sigma_{1,1})$.
The last result allows us to count multicurves on $\Sigma_{1,1}$, which the authors found missing in the literature.
\begin{corollary}
For $L\in\ZZ_{\geq 4}$, we have 
\[
| \{ \gamma \in \mathcal{C}(\Sigma_{1,1})
\mid
i(\gamma) = 0, \, \ell_\omega(\gamma) = L \} |
=
4L
\]
and
\[
| \{ \gamma \in \mathcal{C}(\Sigma_{1,1})
\mid
i(\gamma) = 0, \, \ell_\omega(\gamma) \leq L \} |
=
2L^2 + 2L.
\]
\end{corollary}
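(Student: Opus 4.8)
The plan is to reduce the count of all simple curves (essential closed curves with $i(\gamma)=0$) to the count of primitive ones supplied by Theorem~\ref{mainth1}, combining the fact that every simple curve is a power of a primitive simple curve with Gauss's divisor identity for Euler's totient.

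First I would establish the structural fact that a curve $\gamma \in \cC(\Sigma_{1,1})$ with $i(\gamma)=0$ is precisely a power $\delta^k$ (for some $k \geq 1$) of a primitive simple curve $\delta \in \Pp\cC(\Sigma_{1,1})$. Since $\mathbf{F}_2$ has unique roots, the pair $(\delta,k)$ is determined by $\gamma$; conversely, $k$ parallel copies of an embedded simple closed curve are disjoint, so $i(\delta^k)=0$ whenever $i(\delta)=0$. Because $\delta$ is represented by a reduced circular word, its $k$-th power is again reduced, whence $\ell_\omega(\delta^k)=k\,\ell_\omega(\delta)$; thus multiplicity interacts multiplicatively with word length.

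Next, for fixed $L$ I would partition the simple curves of word length $L$ by their primitive root: $\gamma=\delta^k$ contributes exactly when $k\mid L$ and $\ell_\omega(\delta)=L/k$. Summing the primitive counts from Theorem~\ref{mainth1} gives
\[
|\{\gamma : i(\gamma)=0,\ \ell_\omega(\gamma)=L\}| = \sum_{k\mid L} |\{\delta \in \Pp\cC(\Sigma_{1,1}) : i(\delta)=0,\ \ell_\omega(\delta)=L/k\}| = \sum_{d\mid L} 4\varphi(d) = 4L,
\]
where the last step is the identity $\sum_{d\mid L}\varphi(d)=L$. Here I would need the primitive count $4\varphi(d)$ for \emph{every} divisor $d$, including $d\in\{1,2,3\}$, where Theorem~\ref{mainth1} is not stated; I would check these base cases directly, verifying that the primitive simple curves of word length $1,2,3$ number $4,4,8$ respectively (for instance $a^{\pm1},b^{\pm1}$ in length one), in agreement with $4\varphi(1),4\varphi(2),4\varphi(3)$.

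Finally, the cumulative count follows by summation, since the per-length formula $4m$ holds for all $m\geq 1$:
\[
|\{\gamma : i(\gamma)=0,\ \ell_\omega(\gamma)\leq L\}| = \sum_{m=1}^{L} 4m = 2L(L+1) = 2L^2+2L.
\]
The only genuine obstacle is the structural reduction of the first paragraph: confirming that every simple curve is a power of a primitive simple one and that word length is multiplicative under taking powers. Once that and the base cases $d\leq 3$ are secured, the remaining arithmetic is routine.
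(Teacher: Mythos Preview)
Your argument is correct, but it proceeds in the opposite direction to the paper. In the paper's logical flow, the multicurve count is established \emph{first}, directly from Proposition~\ref{prop:4l1}: there are $4(L-1)$ simple multicurves of general type and length $L$, and adding the four curves $a^{\pm L}, b^{\pm L}$ gives $4L$; summing yields $2L^2+2L$. The primitive count $4\varphi(L)$ of Theorem~\ref{mainth1} is then \emph{derived} from this via M\"obius inversion. You instead take Theorem~\ref{mainth1} as a black box and invert the inversion using Gauss's identity $\sum_{d\mid L}\varphi(d)=L$.

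Both routes are short; the trade-offs are that the paper's approach is self-contained (it never needs the primitive count), whereas yours is more conceptual but requires patching the base cases $d\in\{1,2,3\}$ not covered by Theorem~\ref{mainth1}. Your verification of those cases ($4,4,8$ primitive simple curves, matching $4\varphi(d)$) is correct, as is the structural reduction: the geodesic representative of $\delta^k$ is the geodesic of $\delta$ traversed $k$ times, so $i(\delta)>0$ forces $i(\delta^k)>0$, and cyclic reducedness of $\delta$ guarantees $\ell_\omega(\delta^k)=k\,\ell_\omega(\delta)$.
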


The main result of the paper is the following. In the effort of counting non-necessarily simple curves on the one one-holed torus, we give a combinatorial classification of all curves with a single self-intersection, an analog theorem to Buser--Semmler's Theorem~\ref{thm:bus} for simple curves.

\begin{theorem}\label{thm:si1}
    A primitive curve in $\Pp\cC(\Sigma_{1,1})$ has self-intersection one if and only if, up to renaming the generators in $\{a,b,a^{-1},b^{-1}\}$, it can be written as one of the following:
    \begin{enumerate}
        \item $a^2b^2$, $aba^{-1}b$, $ab^{-1}a^{-1}b^2$, or
        \item $ab^{-1}a^{-1}b a^{n_1}b\cdots a^{n_k}b$, or $ab^{-1}a^{-1}b a^{-n_1}b\cdots a^{-n_k}b$, 
    where the words $a^{n_1}b\cdots a^{n_k}b$, and $a^{-n_1}b\cdots a^{-n_k}b$ are uniquely determined representatives of primitive simple curves, or 
        \item $a^{n_1}b\cdots a^{n_k}b$,
    where $[n_1,\hdots,n_k]$ satisfies that exists an $m\in\ZZ_{\geq1}$ such that for all $i\in\{1,\hdots, k\}$, $n_i\in\{m,m+1\}$ and it is a necklace with $2$-variation (see Definition \ref{def:2v}), or 
        \item $a^mba^{m+2}b$, for some $m\in\ZZ_{\geq1}$. 
    \end{enumerate}
\end{theorem}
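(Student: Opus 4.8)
The plan is to reduce the whole statement to a single combinatorial quantity — a count of \emph{linked pairs} of a cyclic reduced word — and then to isolate exactly the words on which this count equals one, reusing the bookkeeping already developed for Theorem~\ref{mainth1}. First I would record a combinatorial formula for $i(\gamma)$: for a primitive reduced circular word $w$, express $i(w)$ as the number of unordered pairs of cyclic positions whose associated arcs lift to the universal cover with linked endpoints, the linking being decidable locally from the letters. I would note its invariance under the finite group of generator renamings $a\mapsto a^{\pm1}$, $b\mapsto b^{\pm1}$, $a\leftrightarrow b$ and under orientation reversal, together with its behaviour under the mapping class group, since $i$ is a mapping-class invariant and these symmetries are precisely what the classification is stated "up to."

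Second, I would normalize. The decisive dichotomy is between words that, after applying the symmetries, take the positive monotone form $a^{n_1}b\cdots a^{n_k}b$ with all $n_i\ge 1$, and the genuinely mixed-sign words. For the monotone form, $i$ depends only on the cyclic integer sequence $[n_1,\dots,n_k]$, and I would compute it explicitly. Buser--Semmler (Theorem~\ref{thm:bus}) asserts that it vanishes precisely when $[n_1,\dots,n_k]$ is balanced, i.e.\ is a $1$-variation necklace; the natural expectation, which I would verify, is that the sequences producing exactly one crossing are those one unit away from balanced, which is exactly the $2$-variation condition of Definition~\ref{def:2v}. This should yield families (3) and (4) directly.

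For the \emph{if} direction it then suffices to evaluate the formula on each family: the three sporadic words of (1), the one-parameter family $a^mba^{m+2}b$ of (4), the $2$-variation necklaces of (3), and the commutator-prefixed words $ab^{-1}a^{-1}b\,(a^{\pm n_1}b\cdots a^{\pm n_k}b)$ of (2), where by Theorem~\ref{thm:bus} the trailing factor is simple. Geometrically (2) is a simple curve band-summed with a loop around the puncture — note that $ab^{-1}a^{-1}b$ is, up to renaming, the peripheral commutator — so it carries exactly one crossing; I would confirm this with the formula.

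The heart, and the main obstacle, is the \emph{only if} direction, which I would attack geometrically and combinatorially at once. Resolving the unique self-intersection of a curve with $i(\gamma)=1$ in the orientation-respecting way yields a simple multicurve; since on $\Sigma_{1,1}$ every essential simple multicurve is a power of a single primitive simple curve (a once-punctured torus carries no two disjoint non-isotopic essential simple closed curves), and the resolution may in addition produce a peripheral component, the topology of the resolution is extremely constrained. This pins $\gamma$ to being a self-connection of a simple curve, or its band sum with the peripheral loop, which is what manufactures the monotone families and family (2). On the combinatorial side I would show that forcing the linked-pair count to equal one permits exactly one unit of imbalance in the exponent sequence, identifying it with a $2$-variation necklace. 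The delicate points I expect to fight with are: handling mixed-sign reduced words, where the monotone formula does not apply and the sporadic words of (1) live, without dropping a case; proving the $2$-variation condition is sufficient and not merely necessary, i.e.\ that a single imbalance never secretly creates a second crossing; and checking that the final list is exhaustive and irredundant up to renaming, including the low-length coincidences among $a^2b^2$, $aba^{-1}b$, and $ab^{-1}a^{-1}b^2$.
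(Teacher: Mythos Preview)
Your treatment of the positive monotone words $a^{n_1}b\cdots a^{n_k}b$ is essentially the paper's: both set up the Cohen--Lustig linking-pair count and then argue that a single linking pair is equivalent to a single \emph{essential pair of blocks} in the exponent necklace, which is exactly the $2$-variation condition; the paper carries this out as a genuine bijection (Proposition~\ref{prop:si1case3}), and the family $a^m b\, a^{m+2} b$ drops out of the same analysis when the exponent gap is $2$ rather than $1$.

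Where you diverge from the paper is in the non-monotone case, and there is a real gap. Your smoothing argument correctly constrains the topology --- the oriented resolution is two disjoint simple curves, hence either two parallel essential copies or one essential plus one peripheral --- but it does not yield the word-level statement. Two concrete failures: first, your claim that the commutator-prefixed words of family~(2) ``carry exactly one crossing'' because they are band sums with the peripheral loop is false as stated. The word $ab^{-1}a^{-1}b\cdot\sigma$ has $i=1$ only for \emph{one} cyclic shift $\sigma$ of the simple word; the others produce extra linking pairs. The paper isolates this shift explicitly (Lemma~\ref{lem:uniq}) by exhibiting a second linking pair whenever $n_1=m$ or $n_k=m+1$ or $t_1\geq t_s$, and then uses the mapping-class automorphisms $\alpha_m$, $\tilde\alpha_m$ to propagate this uniqueness. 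Your band-sum picture cannot see which shift is the right one. Second, the routing ``self-connection $\Rightarrow$ monotone families'' is wrong: both $a^2b^2$ and $aba^{-1}b$ resolve to two isotopic essential curves (two copies of $ab$ and of $b$ respectively, by homology), yet neither can be renamed into the monotone form $a^{n_1}b\cdots a^{n_k}b$. So the self-connection case already contains sporadic mixed-sign words that your scheme sends to the wrong bin.

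The paper avoids these problems by never leaving the word level: it performs explicit one-crossing-reducing surgeries on specific subwords (e.g.\ $ab^{-1}a^i b a\,\omega \mapsto ab^{-1}a^{i-1}ba\,\omega$), checks via Cohen--Lustig which linking pair is destroyed, and then invokes Theorem~\ref{thm:bus} to recognise when the surgered word is simple. This is more laborious than your topological picture but is what actually pins down the sporadic words and the uniqueness in family~(2). If you want to salvage the resolution approach, you would need, after smoothing, a classification of the arcs (bands) along which two parallel simple curves, or a simple curve and the peripheral loop, can be re-joined with exactly one crossing --- and that classification is essentially the content of Propositions~\ref{prop:si1case2} and~\ref{prop:si1case1}.
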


The non-necessarily primitive case is just the sum of the number of primitive ones plus the number of primitive simple ones with half the length - as these will square to a single self-intersection. The classification allows us again to do the exact count for any $L$, using techniques from the proofs of the simple case. As in the case of simple closed curves, this counting result can already be found in the literature, more specifically in Moira Chas' Preprint \cite[Proposition~3.2]{Chas}. This new combinatorial proof is longer than the existing one due to the use of new combinatorial objects and the need to check further details.

\begin{theorem}
    \label{theorem:si1}
    There are $8$ primitve closed curves on $\Sigma_{1,1}$ of length $4$ with $1$ self-intersection.
    For any $L\in\ZZ_{>4}$, we have
    \[
    | \{
    \gamma \in \Pp\cC(\Sigma_{1,1})
    \mid
    i(\gamma) = 1, \, \ell_\omega(\gamma) = L
    \} |
    =
    \begin{cases}
        8 \, \varphi(L-4) & \text{if $L$ is odd,}\\
        8 \big( \varphi(L-4) + \varphi(L/2)/2 \big) & \text{if $L$ is even.}
    \end{cases}
    \]
\end{theorem}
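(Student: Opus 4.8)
The plan is to count primitive curves with self-intersection one of a given length $L$ by exploiting the explicit classification provided by Theorem~\ref{thm:si1}, which partitions such curves into the four families (1)--(4). Each family must be enumerated separately with respect to word length, and then the contributions summed, taking care that ``up to renaming the generators in $\{a,b,a^{-1},b^{-1}\}$'' introduces a symmetry factor that must be accounted for without overcounting. First I would handle the sporadic base case $L=4$ directly: the words $a^2b^2$, $aba^{-1}b$, $ab^{-1}a^{-1}b^2$ from family (1) together with their images under the generator-renaming action give exactly the claimed $8$ curves, and I would verify by inspection that family (4) with $m=1$ (giving $aba^{3}b$ of length $6$) and families (2)--(3) contribute nothing at length $4$. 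This anchors the count.

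For $L > 4$, I would treat families (2), (3), (4) as the generic contributors, since family (1) is length $4$ only. The key observation driving the formula is that the $\varphi(L-4)$ term should come from the families in (2) and (3) whose combinatorial data (the exponent sequences $[n_1,\dots,n_k]$ of a primitive simple curve) are governed by the same totient-counting mechanism as in Theorem~\ref{mainth1}, but with a length shift of $4$ coming from the fixed prefix $ab^{-1}a^{-1}b$ of the curves in (2). Concretely, a curve $ab^{-1}a^{-1}b\cdot w$ where $w = a^{n_1}b\cdots a^{n_k}b$ represents a primitive simple curve has word length $4 + \ell_\omega(w)$, so the simple curves of length $L-4$ are counted — via Theorem~\ref{mainth1} — by $4\varphi(L-4)$, and I would track how the two sign-variants in (2) together with the generator-renaming symmetry convert this into the coefficient $8$. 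The even-case correction $\varphi(L/2)/2$ I expect to arise from family (3) or (4): these are the ``balanced'' necklace words $a^{n_1}b\cdots a^{n_k}b$ with all $n_i \in \{m,m+1\}$ and $2$-variation, whose existence at a given length is constrained by a divisibility condition that only yields solutions when $L$ is even, and whose count is a totient of $L/2$ reflecting that such a word is essentially determined by a primitive binary necklace of length $L/2$.

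The main obstacle I anticipate is the precise enumeration of family (3): one must count primitive reduced circular words of the form $a^{n_1}b\cdots a^{n_k}b$ with $n_i \in \{m, m+1\}$ that are necklaces with $2$-variation, and translate the $2$-variation condition (Definition~\ref{def:2v}) into a clean combinatorial count. I would reduce this to counting primitive binary necklaces — sequences over the alphabet $\{m, m+1\}$ up to cyclic rotation — subject to the $2$-variation constraint, and I expect the number of these to be expressible via a Möbius-type inversion that collapses to a totient value. The constraint that the total length $\sum n_i + k = L$ together with $n_i \in \{m,m+1\}$ forces relations among $m$, $k$, and the number of $(m+1)$-entries; disentangling which $(m,k)$ pairs yield valid $2$-variation necklaces, and ensuring that family (4)'s words $a^m b a^{m+2} b$ of length $2m+4$ are not double-counted against family (3), is where the bookkeeping is most delicate.

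After establishing the three family counts, I would assemble them: verify that for odd $L$ only the $8\varphi(L-4)$ contribution survives (the even-only families vanishing by the parity obstruction), and that for even $L$ the additional $8\cdot\varphi(L/2)/2$ appears. Finally I would cross-check small values of $L$ against a direct enumeration to confirm the formula and the absence of overcounting between the families, particularly at lengths where family (4) first appears.
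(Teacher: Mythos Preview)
Your overall architecture matches the paper's: decompose via Theorem~\ref{thm:si1}, handle the short sporadic cases directly, pick up $8\varphi(L-4)$ from family~(2) via the length-$4$ shift and Theorem~\ref{mainth1}, and extract the even-only correction from families~(3) and~(4). Two points need fixing.

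First, a small slip: $ab^{-1}a^{-1}b^2$ has length $5$, not $4$. At length $4$ only $a^2b^2$ and $aba^{-1}b$ contribute; each yields four conjugacy classes under generator renaming, giving the eight curves. The word $ab^{-1}a^{-1}b^2$ belongs to the length-$5$ count (and indeed the formula gives $8\varphi(1)=8$ there).

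Second, and this is the real gap: you have not identified the structural fact that makes family~(3) countable. Saying you expect a ``M\"obius-type inversion that collapses to a totient value'' is not enough, because $2$-variation is not a condition that interacts cleanly with necklace powers in the usual way. What the paper proves (and what you would need) is a rigidity result exactly analogous to Proposition~\ref{prop:EU}: for each profile $(m,x,y)$ with $x,y\geq 1$, there exists a necklace with $2$-variation containing $x$ copies of $m$ and $y$ copies of $m+1$ if and only if $\gcd(x,y)=2$, and it is then unique. The mechanism is the same reduction map $A$ used for small variation, together with the observation that $A$ preserves both the $2$-variation property and $\gcd(x,y)$. Once you have this, counting family~(3) at length $L$ becomes the Diophantine problem of counting $(x,y,m)\in\ZZ_{\geq 1}^3$ with $x(m+1)+y(m+2)=L$ and $\gcd(x,y)=2$; this has $\lceil\varphi(L/2)/2\rceil-1$ solutions for even $L$ and none for odd $L$. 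Family~(4) contributes the single word $a^{(L-4)/2}b\,a^{L/2}b$ for each even $L\geq 6$, supplying the missing $+1$, and there is no overlap with family~(3) since the exponents in~(4) differ by $2$ rather than $1$. Without the $\gcd=2$ criterion you have no handle on how many $2$-variation necklaces sit over a given profile, and the count cannot proceed.
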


\begin{corollary}
Under the same assumptions, the expressions sum up to
\[
| \{ \gamma \in \Pp\cC(\Sigma_{1,1})
\mid
i(\gamma) = 1, \, \ell_\omega(\gamma) \leq L
\} |
=
8 (\Phi(L-4) + \Phi(\floor{L/2}) / 2 ),
\]
    where  $\Phi(L)$ denotes the summation up to $L$ of Euler's totient function, and, asymptotically,
    \[
| \{ \gamma \in \Pp\cC(\Sigma_{1,1})
\mid
i(\gamma) = 1, \, \ell_\omega(\gamma) \leq L
\} |
\sim
\frac{27}{\pi^2} \, L^2.
\]
\end{corollary}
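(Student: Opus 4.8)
The plan is to obtain both statements by summing the per-length count of Theorem~\ref{theorem:si1} over $4 \le \ell_\omega \le L$; no new geometry or combinatorics is required, and the only care needed lies in the parity split and the base case at length $4$. Write $C(n)$ for the number of primitive curves with $i(\gamma) = 1$ and $\ell_\omega(\gamma) = n$. Theorem~\ref{theorem:si1} gives $C(4) = 8$ and, for $n > 4$, $C(n) = 8\varphi(n-4)$ when $n$ is odd and $C(n) = 8\varphi(n-4) + 4\varphi(n/2)$ when $n$ is even (rewriting the even-length bonus $8\varphi(n/2)/2$ as $4\varphi(n/2)$). I would thus split the cumulative count into a ``main term'' coming from the $8\varphi(n-4)$ contributions and an ``even bonus'' coming from the $4\varphi(n/2)$ contributions.

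First I would handle the main term. Summing $8\varphi(n-4)$ over $5 \le n \le L$ and reindexing by $m = n-4$ gives
\[
\sum_{n=5}^{L} 8\varphi(n-4) = 8\sum_{m=1}^{L-4}\varphi(m) = 8\,\Phi(L-4).
\]
Next I would handle the even bonus. The term $4\varphi(n/2)$ occurs exactly for even $n$ with $6 \le n \le L$; reindexing by $k = n/2$, these correspond to $3 \le k \le \floor{L/2}$, the top index being $\floor{L/2}$ for both parities of $L$ since the largest even integer $\le L$ is $L$ or $L-1$ accordingly. Hence
\[
\sum_{\substack{n=6\\ n \text{ even}}}^{L} 4\varphi(n/2) = 4\sum_{k=3}^{\floor{L/2}}\varphi(k) = 4\,\Phi(\floor{L/2}) - 4\bigl(\varphi(1)+\varphi(2)\bigr) = 4\,\Phi(\floor{L/2}) - 8.
\]

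Then I would add back the base case $C(4) = 8$ and observe the cancellation: the $-8$ produced by starting the even sum at $k = 3$ (that is, by omitting the $k = 1, 2$ terms $\varphi(1),\varphi(2)$) is exactly the base contribution $8$. This yields
\[
\sum_{n=4}^{L} C(n) = 8\,\Phi(L-4) + 4\,\Phi(\floor{L/2}) = 8\bigl(\Phi(L-4) + \tfrac12\Phi(\floor{L/2})\bigr),
\]
which is the claimed closed form.

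For the asymptotics, I would plug in the classical estimate $\Phi(N) \sim \tfrac{3}{\pi^2}N^2$ (with the Walfisz error term already cited after Theorem~\ref{mainth1}). Then $8\,\Phi(L-4) \sim \tfrac{24}{\pi^2}L^2$ and $4\,\Phi(\floor{L/2}) \sim 4\cdot\tfrac{3}{\pi^2}(L/2)^2 = \tfrac{3}{\pi^2}L^2$, and adding gives $\tfrac{27}{\pi^2}L^2$; since every error contribution (both the Walfisz terms and the $\bO(L)$ coming from expanding $(L-4)^2$ and $\floor{L/2}^2$) is subquadratic, the stated $\sim \tfrac{27}{\pi^2}L^2$ follows. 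The main---indeed the only---obstacle is the low-stakes bookkeeping just described: matching the base-case count against the omitted low-index totient terms and tracking the floor through both parities. The substantive content lives entirely in Theorem~\ref{theorem:si1}, which this corollary merely accumulates.
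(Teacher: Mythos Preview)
Your proposal is correct and is exactly the straightforward summation the paper intends: the corollary is stated without proof, and deriving it amounts to summing Theorem~\ref{theorem:si1} over $4 \le n \le L$ with the parity bookkeeping and base-case cancellation you describe. Both the closed form and the asymptotic (via $\Phi(N) \sim \tfrac{3}{\pi^2}N^2$) are handled correctly.
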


\begin{corollary}
We have
\[
\lim_{L \to \infty}
\frac{| \{ \gamma \in \Pp\cC(\Sigma_{1,1}) \mid i(\gamma) = 0,\, \ell_\omega(\gamma) \leq L \} |}{| \{ \gamma \in \Pp\cC(\Sigma_{1,1}) \mid i(\gamma) = 1,\, \ell_\omega(\gamma) \leq L \} |}
=
\frac{4}{9}.
\]
\end{corollary}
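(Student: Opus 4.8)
The plan is to obtain both quantities directly from the cumulative asymptotic estimates already established, since the limit is nothing more than the ratio of their leading coefficients. The numerator is controlled by the first corollary to Theorem~\ref{mainth1}, which yields
\[
| \{ \gamma \in \Pp\cC(\Sigma_{1,1}) \mid i(\gamma) = 0,\, \ell_\omega(\gamma) \leq L \} |
=
\frac{12}{\pi^2} \, L^2 + \bO\!\left( L (\log L)^{2/3} (\log\log L)^{4/3} \right),
\]
while the denominator is controlled by the corollary to Theorem~\ref{theorem:si1}, which gives
\[
| \{ \gamma \in \Pp\cC(\Sigma_{1,1}) \mid i(\gamma) = 1,\, \ell_\omega(\gamma) \leq L \} |
\sim
\frac{27}{\pi^2} \, L^2 .
\]

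First I would write $N_0(L)$ and $N_1(L)$ for the numerator and denominator, respectively. From the Walfisz error term I get $N_0(L) = \tfrac{12}{\pi^2} L^2 (1 + o(1))$, because the error is $\bO(L(\log L)^{2/3}(\log\log L)^{4/3}) = o(L^2)$; likewise $N_1(L) = \tfrac{27}{\pi^2} L^2 (1 + o(1))$ by the very definition of the equivalence $\sim$. Dividing, the factors of $\pi^{-2}$ and of $L^2$ cancel exactly, so that
\[
\frac{N_0(L)}{N_1(L)}
=
\frac{\tfrac{12}{\pi^2} L^2 (1 + o(1))}{\tfrac{27}{\pi^2} L^2 (1 + o(1))}
=
\frac{12}{27} \cdot \frac{1 + o(1)}{1 + o(1)}
\xrightarrow[L \to \infty]{}
\frac{12}{27}
=
\frac{4}{9} .
\]

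There is no genuine obstacle at this final stage: all the substance resides in the two asymptotic counts, whose proofs rest on the exact formulas of Theorem~\ref{mainth1} and Theorem~\ref{theorem:si1} together with the classical estimate $\Phi(L) \sim \tfrac{3}{\pi^2} L^2$ for the summatory totient function. The only point worth stating explicitly is that the transcendental constant $\pi^{-2}$, present in both numerator and denominator, cancels, leaving the rational value $4/9$; this is exactly the arithmetic check $\tfrac{12}{27} = \tfrac49$, and it confirms that the ratio is insensitive to the precise (Walfisz-type) error terms, which are swept into the $o(1)$ factors.
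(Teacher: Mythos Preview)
Your argument is correct and matches the paper's intended reasoning: the corollary is stated without proof precisely because it follows immediately by dividing the two asymptotic counts $\tfrac{12}{\pi^2}L^2$ and $\tfrac{27}{\pi^2}L^2$ from the preceding corollaries. There is nothing to add.
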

Here the word length $\ell_\omega$ can be replaced by the hyperbolic length induced by any complete hyperbolic metric on $\Sigma_{1,1}$ (see the section ``Related work''), and this corollary can be interpreted as follows:
on $\Sigma_{1,1}$, the probability that a random curve with at most one self-intersection has one self-intersection is $9/13$.

Lastly, with different methods, we count the curves of given word length without restriction on the self-intersection.

\begin{theorem} \label{mainth3}
There are $4$ primtive curves of length $1$, $8$ of length $2$, and for any $L \in \ZZ_{\geq 3}$, we have formula
\[
| \{ \gamma \in \Pp\cC^*(\Sigma_{1,1}) \mid \ell_\omega(\gamma) = L \} |
=
\frac{1}{L}\sum_{d|L}\mu(d)\,3^{L/d},
\]
where $\mu$ is the Möbius function. For not necessarily primitive curves, we have for any $L \in \ZZ_{\geq 1}$,
\[
| \{ \gamma \in \mathcal{C}^*(\Sigma_{1,1}) \mid \ell_\omega(\gamma) = L \} |
=
\frac{1}{L} \sum_{d \mid L} \varphi(d) \, 3^{L/d} + \frac{3+(-1)^L}{2}.
\]
\end{theorem}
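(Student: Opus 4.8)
The plan is to convert both counts into pure word combinatorics and then apply a transfer-matrix computation followed by Burnside's lemma (for the unrestricted count) and Möbius inversion (for the primitive count). Since free homotopy classes of curves on $\Sigma_{1,1}$ correspond to conjugacy classes in $\mathbf{F}_2$, which in turn correspond bijectively to reduced circular words, counting curves of word length $L$ is the same as counting cyclically reduced circular words of length $L$ over the alphabet $\{a,b,a^{-1},b^{-1}\}$; the primitive curves are exactly those whose circular word is aperiodic, i.e.\ not a proper power. I would therefore reduce the theorem to two enumeration problems on circular words of length $L$, treated up to rotation.

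The quantitative heart of the argument is the number $a(L)$ of cyclically reduced \emph{linear} sequences $(x_1,\dots,x_L)$ obeying the cyclic constraint $x_{i+1}\neq x_i^{-1}$ with indices taken modulo $L$. I would encode the constraint by the $4\times4$ transfer matrix $M$ with $M_{xy}=1$ unless $y=x^{-1}$, so that $a(L)=\operatorname{tr}(M^L)$. Writing $M=J-P$ with $J$ the all-ones matrix and $P$ the involution matrix swapping each letter with its inverse, one checks that $J$ and $P$ commute and are simultaneously diagonalizable; the eigenvalues of $M$ come out to be $3,1,1,-1$, whence $a(L)=3^L+2+(-1)^L$. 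This single formula feeds both parts of the theorem.

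For the unrestricted count I would apply Burnside's lemma to the rotation action of $\ZZ/L\ZZ$: a linear sequence is fixed by the rotation through $j$ positions exactly when it has period dividing $\gcd(j,L)$, and there are $\varphi(L/d)$ values of $j$ with $\gcd(j,L)=d$, so the number of circular words equals $\frac1L\sum_{d\mid L}\varphi(L/d)\,a(d)$. Inserting $a(d)=3^d+2+(-1)^d$ and reindexing the $3^d$-summand by $d\mapsto L/d$ yields the principal term $\frac1L\sum_{d\mid L}\varphi(d)3^{L/d}$, and it remains to identify $\frac1L\sum_{d\mid L}\varphi(L/d)\bigl(2+(-1)^d\bigr)$ with $\tfrac{3+(-1)^L}{2}$. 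The identity $\sum_{d\mid L}\varphi(L/d)=L$ disposes of the constant $2$, and a parity computation shows $\sum_{d\mid L}\varphi(L/d)(-1)^d$ equals $-L$ for odd $L$ and $0$ for even $L$, giving the stated two-case closed form.

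For the primitive count I would use the necklace relation $a(L)=\sum_{d\mid L} d\,N_{\mathrm p}(d)$, where $N_{\mathrm p}(d)$ is the number of primitive (aperiodic) circular words of length $d$, since a length-$L$ sequence of primitive period $d$ arises from one of the $d$ rotations of an aperiodic circular word of length $d$. Möbius inversion gives $N_{\mathrm p}(L)=\frac1L\sum_{d\mid L}\mu(L/d)\,a(d)$; the $3^d$-part reindexes to $\frac1L\sum_{d\mid L}\mu(d)3^{L/d}$, and the correction $\frac1L\sum_{d\mid L}\mu(L/d)\bigl(2+(-1)^d\bigr)$ is shown to vanish for $L\ge 3$ — the constant part because $\sum_{d\mid L}\mu(L/d)=0$ for $L>1$, and the alternating part because $\sum_{d\mid L}\mu(L/d)(-1)^d=0$ once $L\ge3$, by the same split of the divisors of $L$ according to their $2$-adic valuation. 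The exceptional lengths $L=1,2$ are then settled by direct enumeration. I expect the main obstacle to be exactly this parity bookkeeping: carrying the $(-1)^d$ factors accurately through both the Burnside and the Möbius sums, and organizing the divisors of $L$ into odd, exactly-even, and $4$-divisible classes, is where the odd/even dichotomy of the final answers is generated and where sign slips are easiest.
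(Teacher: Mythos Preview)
Your proposal is correct and structurally parallel to the paper's argument: both reduce the question to counting cyclically reduced \emph{linear} words of length $L$, obtain the same key identity $a(L)=3^L+2+(-1)^L$, and then pass to circular words by M\"obius inversion for the primitive case and a divisor sum for the general case. The differences are in the tools used at each step. You compute $a(L)$ as $\operatorname{tr}(M^L)$ for the transfer matrix $M=J-P$ and read off the eigenvalues $3,1,1,-1$; the paper instead writes each word as $x_1^{n_1}\cdots x_k^{n_k}$, splits on the parity of $k$, and sums geometric series to find the generating function $W(t)$, then extracts $w_n$ by partial fractions. Your route is shorter and bypasses the case analysis. For the non-primitive count you apply Burnside's lemma directly to the rotation action, whereas the paper first establishes the primitive formula and then recovers the general one by summing the primitive counts over the divisors of $L$; these are equivalent, but your version makes the necklace structure explicit and is arguably more transparent about where the correction term $\tfrac{3+(-1)^L}{2}$ comes from. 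One incidental observation: your computation (and the paper's own proof, see their equation for primitive necklaces at $n=1,2$) gives $4$ primitive circular words of length $2$, not the ``$8$'' appearing in the theorem statement---that appears to be a typo in the statement rather than an error in either argument.
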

\begin{corollary} \label{cor:3/2}
    We have
    \[
        | \{ \gamma \in \Pp\cC^*(\Sigma_{1,1}) \mid \ell_\omega(\gamma) = L \} |
        \sim
        \frac{3^L}{L},
        \qquad
        | \{ \gamma \in \Pp\cC^*(\Sigma_{1,1}) \mid \ell_\omega(\gamma) \leq L \} |
        \sim
        \frac{3}{2} \cdot \frac{3^L}{L}
    \]
    and the same holds true if $\Pp\cC^*$ is replaced by $\cC^*$.
\end{corollary}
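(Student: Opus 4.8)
The plan is to derive both asymptotics directly from the exact formulas of Theorem~\ref{mainth3}, so that no further geometry is needed; the entire argument reduces to elementary asymptotics of sums of the shape $\sum 3^n/n$. I would handle the primitive and non-primitive cases in parallel, since their generating formulas differ only in lower-order terms (the coefficients $\mu(d)$ versus $\varphi(d)$ on the proper divisors, and the bounded additive correction $\tfrac{3+(-1)^L}{2}$), all of which will turn out to be negligible against the main term.

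For the fixed-length statement I would first isolate the $d=1$ term of each formula, which contributes $3^L/L$, and then bound the remaining divisor terms crudely. Since $|\mu(d)|\le 1$ and $\varphi(d)\le d\le L$, since $L$ has at most $L$ divisors, and since every proper divisor $d\ge 2$ satisfies $3^{L/d}\le 3^{L/2}$, both sums over $d\ge 2$ are $\bO\!\left(L\cdot 3^{L/2}\right)$, so that after dividing by $L$,
\[
\frac{1}{L}\sum_{d\mid L}\mu(d)\,3^{L/d}
= \frac{3^L}{L}+\bO\!\left(3^{L/2}\right),
\qquad
\frac{1}{L}\sum_{d\mid L}\varphi(d)\,3^{L/d}
= \frac{3^L}{L}+\bO\!\left(3^{L/2}\right).
\]
As $3^{L/2}=o(3^L/L)$ and the additive term in the non-primitive count is $\bO(1)$, both per-length counts are $\sim 3^L/L$, which is the first assertion for both $\Pp\cC^*$ and $\cC^*$.

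For the cumulative statement I would sum the per-length formula over $n\le L$ (the finitely many small-$n$ exceptions contribute only $\bO(1)$) and carry the error bound above through the summation, using $\sum_{n\le L}3^{n/2}=\bO(3^{L/2})=o(3^L/L)$, so the problem reduces to showing $\sum_{n\le L}3^n/n\sim\tfrac{3}{2}\cdot 3^L/L$. This is the only real point of care: termwise equivalence $a_n\sim 3^n/n$ does not by itself transfer to partial sums, so I would instead exploit that the series is dominated by its largest terms. Factoring out the last term,
\[
\sum_{n=1}^{L}\frac{3^n}{n}
= \frac{3^L}{L}\sum_{k=0}^{L-1}3^{-k}\,\frac{L}{L-k},
\]
I would show the bracketed sum tends to $\sum_{k\ge 0}3^{-k}=\tfrac{3}{2}$: it is bounded below by $\sum_{k=0}^{L-1}3^{-k}$, and for the upper bound I would split the range at some $K=K(L)\to\infty$ with $K=o(L)$ (say $K=\lfloor\log L\rfloor$), on which $\tfrac{L}{L-k}\to 1$ uniformly, while the tail $k>K$ contributes at most $L\sum_{k>K}3^{-k}=\bO\!\left(L\,3^{-K}\right)\to 0$.

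This yields the factor $\tfrac{3}{2}$ and hence $\tfrac{3}{2}\cdot 3^L/L$ for both $\Pp\cC^*$ and $\cC^*$. The main obstacle is precisely this tail-dominance estimate producing the constant $\tfrac{3}{2}=\tfrac{1}{1-1/3}$; everything else is bookkeeping of negligible divisor and parity terms, and the interchange of the limit in $k$ with the limit in $L$ is what must be justified carefully rather than asserted.
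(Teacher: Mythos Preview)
Your argument is correct. The paper actually states this corollary without proof, leaving it as an immediate consequence of the exact formulas in Theorem~\ref{mainth3}; your write-up supplies precisely the elementary asymptotic verification one would expect, and the tail-splitting argument extracting the constant $\tfrac{3}{2}=\tfrac{1}{1-1/3}$ is the right way to handle the cumulative sum.

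One small remark on bookkeeping: as written, your bound ``$\varphi(d)\le L$ and $L$ has at most $L$ divisors'' would only give $\bO(L^2\cdot 3^{L/2})$ for the $\varphi$-sum, not $\bO(L\cdot 3^{L/2})$. Either is harmless for the conclusion, but if you want the sharper bound you stated, invoke the identity $\sum_{d\mid L}\varphi(d)=L$ instead, which gives $\sum_{d\mid L,\,d\ge 2}\varphi(d)\,3^{L/d}\le 3^{L/2}\sum_{d\mid L}\varphi(d)=L\cdot 3^{L/2}$ directly.
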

\begin{remark} Hence, for $L\in\ZZ_{\geq1}$,
    \begin{align*}
     | \{ \gamma \in \Pp\cC^*(\Sigma_{1,1}) \mid \ell_\omega(\gamma)& = L \} |=\\ 
     &=|\{ \text{aperiodic necklaces with $L$ beads and }3\text{ colors}\}|+\delta_{\{1,2\}}(L),
     \end{align*} where $\delta_{\{1,2\}}(L)=1$ if $L\in\{1,2\}$ and vanishes otherwise, and
     \begin{align*} | \{ \gamma \in \mathcal{C}^*(\Sigma_{1,1}) \mid \ell_\omega(\gamma) = L \} |=|\{ \text{necklaces with $L$ beads and }3\text{ colors}\}|+\epsilon(L),
     \end{align*}
     where $\epsilon(L)=1$ if $L$ is odd and $\epsilon(L)=2$ if $L$ is even.

         Even when knowing this numerical coincidence, the authors could not find straightforwardly any natural bijection between these sets. However, it remains an open question, whose answer would possibly give automatically the counting on higher genus.
\end{remark}

\section{Related work}
Curve counting problems have been extensively studied, especially in the context of hyperbolic geometry.
More precisely, given a complete hyperbolic metric $X$ on a topological surface $\Sigma_{g,n}$ of genus $g$ with $n$ punctures, in each free homotopy class of essential curves, there exists a unique geodesic representative.
Therefore, rather than using the word length, we can also define the length of a free homotopy class $\gamma \in \cC(\Sigma_{g,n})$ by the length $\ell_X(\gamma)$ of its geodesic representative induced by $X$.
The famous prime number theorem for geodesics asserts that
\begin{equation} \label{eq:PNT}
    | \{ \gamma \in \Pp\cC(\Sigma_{g,n}) \mid \ell_X(\gamma) \leq L \} |
    \sim
    \frac{\mathrm{e}^L}{L}.
\end{equation}
This was an achievement initiated in the mid-20th century, and the names of Delsarte, Hejhal, Huber, Margulis, Selberg, Sarnak feature most strongly.
The estimate \eqref{eq:PNT} can be made effective, and the error terms are related to the Laplacian spectrum of $X$; see, for example, \cite[Section~5.4.2]{Bergeron}, \cite[Section~9.6]{Buser}.

One can also count curves under more topological constraints.
For example, one may restrict consideration to curves with no self-intersection, or a given number of self-intersections.
Efforts in this direction (see for example \cite{Rees, Birman-Series:simple, McShane-Rivin, Rivin}) reached their greatest heights with Mirzakhani's groundbreaking work \cite{Mirzakhani:first}.
As a consequence of her findings, there exist explicit constants $C_{g,n} > 0$ depending only on $g$ and $n$ and $B_X > 0$ depending only on the hyperbolic metric $X$ such that
\begin{equation} \label{eq:i=0}
    | \{ \gamma \in \Pp\cC(\Sigma_{g,n}) \mid i(\gamma) = 0, \, \ell_X(\gamma) \leq L \}|
    \sim
    C_{g,n} \cdot B_X \cdot L^{6g-6+2n}.
\end{equation}
This result was extended to the non-simple case by Mirzakhani twelve years later \cite{Mirzakhani:last}: for any $k \in \ZZ_{\geq 0}$ there exists a constant $C_{g,n;k}$, such that \eqref{eq:i=0} remains valid when replacing ``$i(\gamma)=0$'' by ``$i(\gamma)=k$'' and $C_{g,n}$ by $C_{g,n;k}$.

It turns out that the length function $\ell_X \colon \Pp\cC(\Sigma_{g,n}) \to \mathbb{R}$ defined by the hyperbolic metric $X$ is not essential: Erlandson and Souto \cite[Theorem~1.2]{ErlandssonSouto} proved that a very similar result holds for any ``positive, continuous and homogeneous function on the space of geodesic currents on $\Sigma_{g,n}$''.
More precisely, for any such a function $\ell$, which can be taken as $\ell_\omega$ (see \cite{Erlandsson}) or $\ell_X$, there exist constants $C_{g,n;k}$ depending only on $g,n$ and $k$, and $B_{\ell}$ depending only on the function $\ell$, such that
\begin{equation} \label{eq:i=k}
    | \{ \gamma \in \Pp\cC(\Sigma_{g,n}) \mid i(\gamma) = k, \, \ell(\gamma) \leq L \}|
    \sim
    C_{g,n;k} \cdot B_{\ell} \cdot L^{6g-6+2n}.
\end{equation}
As a result, the following limit
\[
    \lim_{L \to \infty}
    \frac{| \{ \gamma \in \Pp\cC(\Sigma_{g,n}) \mid i(\gamma) = 0, \, \ell(\gamma) \leq L \} |}{| \{ \gamma \in \Pp\cC(\Sigma_{g,n}) \mid i(\gamma) = 1, \, \ell(\gamma) \leq L \} |}
\]
exists and does not depend on the choice of length function $\ell$.
Finally, let us mention that the estimate \eqref{eq:i=0} can also be made effective; see \cite{McShane-Rivin, Eskin-Mirzakhani-Mohammadi, AranaHerrera}.

In terms of counting curves with a given word-length and self-intersection, there are many works by Chas, Phillips, Lalley, and McMullen; see \cite{Chas-Phillips, Chas-Lalley, Chas:2015, Chas-McMullen-Phillips}. Many bounds have been found for the general cases and closed formulas for given length-intersection difference. The main question in this regard is to classify all words of a given self-intersection and find a closed formula for 
\[
 | \{ \gamma\in \Pp\cC (\Sigma_{1,1})\mid i(\gamma)=k, \ell_\omega(\gamma)=L\} |.
\]

\subsection*{Organization of the paper}
Section \ref{section:simple} of the paper will first introduce in our notation the characterization of simple closed curves by \cite{BuserSemmler}, study its combinatorial rigidity, and then prove Theorem \ref{mainth1} with it. Section \ref{section:si1} will start by doing the analogous characterization of curves, but this time targeting self-intersection one, hence proving Theorem \ref{thm:si1}. The last part of the section will prove Theorem \ref{theorem:si1} by using the combinatorics already studied in Section \ref{section:simple}. Finally, in Section \ref{section:all} we use different methods to prove Theorem \ref{mainth3}, which will be analytic combinatorics, giving the broadest image of our counting.

\subsection*{Acknowledgements}
The authors would like to thank Hugo Parlier for his very helpful comments and constant interest.
Also for enlightening conversations with Pierre Arnoux, Valérie Berthé, Viveka Erlandsson, and Alexey Balitskiy (\textcyr{спасибочки Лёшенька!}).

\part{Simple curves}\label{section:simple}
In this section, we study simple curves, namely closed curves without self-intersection, on the once-punctured torus $\Sigma_{1,1}$.
As an application of the techniques we are about to develop, we provide an alternative proof for Theorem~\ref{mainth1}.

\section{Curves as necklaces}
As we will soon see, the following seemingly unrelated definition will simplify our discussion.
\begin{definition}[Necklace]
    A \emph{necklace of integers}, or simply a \emph{necklace}, which can be seen as a (finite) sequence of positive numbers written on a circle, is an equivalence class of a finite sequence of positive integers $(n_i)_i$ where two sequences are equivalent if they differ by a circular shift.
\end{definition}
We use ``$(\dots)$'' to denote sequences, and use ``$[\dots]$'' to denote necklaces.
For instance, $(1,2,3)$, $(2,3,1)$, and $(3,1,2)$ are different sequences but represent the same necklace $[1,2,3]$.

\begin{definition}[Small variation] \label{def:smv}
    A necklace $[n_i]_i$ has \emph{small variation} if, for any $s \in \ZZ_{\geq 1}$, sums of $s$ consecutive elements never differ by more than $\pm 1$.
    In symbols, this means
    \begin{equation} \label{eq:smaVa}
    \bigg| \sum_{j=1}^s n_{i_1+j} - \sum_{j=1}^s n_{i_2+j} \bigg| \leq 1
    \end{equation}
    for all $i_1,i_2$, and indices are taken modulo $k$.
\end{definition}

For example,
the necklaces $[5, 5, 5, 5]$, $[5, 5, 5, 4]$, and $[5, 4, 5, 4]$ have small variation, but $[5, 5, 3]$, $[5, 5, 4, 4]$, and $[5, 5, 4, 5, 5, 5, 4, 5, 4]$ do not.

The following definitions are standard.
Let $w$ be a necklace and $m \in \ZZ$.
We denote by $|w|_m$ the number of occurrences of $m$ in $w$.
A sequence is called a \emph{block} of $w$, if it can be found as a contiguous subsequence within a sequential representation of $w$.
For example, $(1,2)$ and $(4,1)$ are blockss of $[1,2,3,4]$, but $(1,4)$ is not.
A \emph{run} is a constant block that is not properly contained in any constant block.
For instance, there are $3$ runs of $2$ in $[2,1,2,1,2,1,2,2]$ (two of length $1$ and one of length $3$).

\begin{remark} \label{rem:byDef}
    Let $w = [n_i]_i$ be a non-constant necklace.
    If \eqref{eq:smaVa} holds for $s=1$, then there exists $m \in \ZZ$ such that $n_i \in \{ m, m+1 \}$ for all index $i$.
    If \eqref{eq:smaVa} holds for $s=2$ too, then $m$ (resp. $m+1$) cannot appear consecutively if $|w|_m \leq |w|_{m+1}$  (resp. $|w|_{m+1} \leq |w|_m$).
\end{remark}

\begin{lemma} \label{lemma:gapsize}
Let $m, x, y \geq 1$, and let $w$ be a necklace with small variation that contains exactly $x$ occurrences of the number $m$, and $y$ occurrences of the number $m+1$.
Write $q \coloneqq \max(x,y)/\min(x,y)$.
If $x \geq y$, then all runs of $m$ in $w$ have size $\floor*{q}$ or $\ceil*{q}$, and all runs of $m+1$ in $w$ have size $1$.
If $x \leq y$, then all runs of $m+1$ have size $\floor*{q}$ or $\ceil*{q}$, and all runs of $m$ have size $1$.
\end{lemma}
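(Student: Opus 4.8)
The plan is to reduce the claim to a single balancing statement about the runs of the minority symbol, prove that statement by one clean two-window comparison, and then read off the exact run sizes with the division algorithm.

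By the symbol-swap $m \leftrightarrow m+1$ — which replaces the sum of any length-$s$ window by $(2m+1)s$ minus that sum, and therefore preserves \eqref{eq:smaVa} — the two cases of the lemma are interchanged, so it suffices to treat $x \ge y$, where $q = x/y$. Since $w$ is non-constant and both symbols occur, Remark~\ref{rem:byDef} applied with $s=1$ shows every entry lies in $\{m, m+1\}$, so $w$ has length $x+y$; applied with $s=2$ and using $y = |w|_{m+1} \le |w|_m = x$, it shows that $m+1$ never occurs twice in a row. This already gives the easy half: every run of $m+1$ has size $1$. The $y$ isolated copies of $m+1$ therefore cut the circle into exactly $y$ runs of $m$, say of sizes $r_1,\dots,r_y \ge 1$ with $\sum_i r_i = x$.

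The core of the argument is to show $\max_i r_i - \min_i r_i \le 1$. I would first reinterpret \eqref{eq:smaVa}: a window of $s$ consecutive letters has sum $ms$ plus the number of copies of $m+1$ it contains, so small variation is exactly the statement that any two equal-length windows contain the same number of copies of $m+1$ up to $\pm 1$. Suppose, for contradiction, that some run has size $R$ and another has size $r$ with $R \ge r+2$ (so in particular $y \ge 2$, and the short run is flanked by two distinct copies of $m+1$). Compare two windows of common length $s = r+2$: one placed entirely inside the long run $m^{R}$, which contains no copy of $m+1$, and the window $(m+1)\,m^{r}\,(m+1)$ straddling the short run, which contains two. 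Their sums differ by $2$, contradicting \eqref{eq:smaVa} for $s = r+2$.

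Once the $y$ run sizes are known to take at most two consecutive integer values and to sum to $x$, writing $x = yv + j$ with $0 \le j < y$ forces exactly $j$ runs of size $v+1$ and $y-j$ of size $v$, i.e. every run of $m$ has size $\lfloor q \rfloor$ or $\lceil q \rceil$. The main obstacle, I expect, is precisely the choice of windows in the previous step: the obvious candidates — two windows of length $r+1$, or an all-$m$ window against $m^{r}(m+1)$ — differ by only $1$, which \eqref{eq:smaVa} permits, so they prove nothing. The decisive idea is to flank the short run by $m+1$ on both sides, forcing the minority symbol to be counted twice against a window buried in the long run that sees it zero times; producing a gap of $2$ rather than $1$ is the whole point.
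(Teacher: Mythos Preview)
Your proof is correct and follows essentially the same approach as the paper: both reduce by symmetry to $x\ge y$, use $s=2$ to force runs of $m+1$ to have size $1$, and then derive a contradiction from a run-size gap of at least $2$ by comparing a window sitting entirely inside a long run of $m$ (zero copies of $m+1$) with a window straddling a short run together with its two flanking copies of $m+1$ (two copies). The only cosmetic difference is that you prove $\max_i r_i-\min_i r_i\le1$ in a single stroke with $s=r+2$ and then invoke the division algorithm, whereas the paper handles the ``too small'' and ``too large'' cases separately with $s=\lfloor q\rfloor+1$; your packaging is marginally cleaner but the content is the same.
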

\begin{proof}
Without loss of generality, we assume that $x \geq y$.
It follows from \eqref{eq:smaVa} by taking $s = 2$ that all runs of $m+1$ have size $1$.
If there exists a run of $m$ of size at most $\floor{q}-1$, then there is a run of $m$ of size at least $\floor{q}+1$, and it follows from \eqref{eq:smaVa} by taking $s = \floor{q}+1$ the necklace does not have small variation.
Similarly, if there exists a run of $m$ of size at least $\ceil{q}+1$, then the necklace does not have small variation.
\end{proof}

One of the main reasons why we are interested in necklaces with small variation is the following.

\begin{theorem}[{\cite[Theorem~6.2]{BuserSemmler}}] \label{thm:bus}
    Every simple closed curve on $\Sigma_{1,1}$ can be represented, after suitably renaming the generators, by one of the following words:
    \begin{enumerate}
        \item \label{type1}
            $a$,

        \item \label{type2}
            $aba^{-1}b^{-1}$,

        \item \label{type}
            $ab^{n_1} a b^{n_2} \cdots a b^{n_r}$, where $[n_1, \dots, n_r]$ has small variation.
    \end{enumerate}
    Conversely, each of these words is homotopic to a power of a simple closed curve.
\end{theorem}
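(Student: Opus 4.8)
The plan is to realize every simple closed curve as the image of the standard curve $a$ under the mapping class group, and to read off its word form from a continued-fraction factorization of the corresponding slope. First I would recall the classification of simple closed curves on $\Sigma_{1,1}$ by their slope: sending $a \mapsto (1,0)$ and $b\mapsto (0,1)$ identifies $H_1(\Sigma_{1,1};\ZZ)$ with $\ZZ^2$, and a standard argument (the bigon criterion together with the fact that on a torus the algebraic and geometric intersection numbers of simple curves agree) shows that the unoriented essential non-peripheral simple closed curves are in bijection with primitive classes $\pm(p,q)\in\ZZ^2$, i.e.\ with slopes in $\mathbb{Q}\cup\{\infty\}$. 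The peripheral curve, null-homologous and unique up to isotopy, is represented by the commutator $aba^{-1}b^{-1}$, which is exactly case~\eqref{type2}; the two generators $a$ and $b$ themselves (homology $(1,0)$ and $(0,1)$) give, after renaming, case~\eqref{type1}. It therefore remains to treat the slopes with both coordinates nonzero, and after renaming the generators we may assume the curve has at least as many $b$'s as $a$'s, so that the $a$'s are separated by nonempty blocks of $b$'s; these will be case~\eqref{type}.

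For such a slope the mapping class group $\mathrm{MCG}(\Sigma_{1,1})\cong \mathrm{SL}(2,\ZZ)$ acts transitively on slopes, so the corresponding curve is $\phi(a)$ for some $\phi$. Writing $\phi$ via the Euclidean algorithm on the slope factors it into the two positive Dehn-twist automorphisms of $\mathbf{F}_2$, which act by elementary substitutions of the form $a \mapsto a,\ b\mapsto ba^{\pm1}$ and $a\mapsto ab^{\pm1},\ b\mapsto b$. I would then prove by induction on the number of factors that the reduced circular word of $\phi(a)$ has the form
\[
a b^{n_1} a b^{n_2}\cdots a b^{n_r},
\]
with $r$ the number of occurrences of $a$ and $\sum_i n_i$ the number of occurrences of $b$, these being the homology coordinates. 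The inductive step amounts to tracking how a single substitution modifies the gap necklace $[n_1,\dots,n_r]$.

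The heart of the matter, and the step I expect to be the main obstacle, is to show that the necklace $[n_1,\dots,n_r]$ produced this way has small variation, and conversely that every small-variation necklace arises. The clean way to organize this is to pass to the binary necklace $w\in\{a,b\}^{\ast}$ itself and use the notion of a \emph{balanced} word: $w$ is balanced if any two blocks of equal length differ in their number of $a$'s by at most one. On the geometric side, the cutting sequence of a Euclidean line of slope $p/q$ on the square torus is a Christoffel/Sturmian word, which is exactly the balanced circular binary word with the prescribed letter counts; this both identifies the word of the simple curve and yields the converse, since a balanced binary necklace is the cutting sequence of a line of rational slope and hence (a power of) a simple closed curve. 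On the combinatorial side, one checks the elementary equivalence
\[
[n_1,\dots,n_r]\ \text{has small variation} \iff w = ab^{n_1}\cdots ab^{n_r}\ \text{is balanced},
\]
by translating a window of $w$ containing exactly $s$ letters $a$ into a sum of $s$ consecutive gaps $n_i$, so that inequality~\eqref{eq:smaVa} over all $s$ is precisely the balance condition. Establishing this equivalence carefully, and verifying that balanced words are closed under the relevant substitutions so that the induction of the previous paragraph goes through, is where the real work lies; Lemma~\ref{lemma:gapsize} is exactly the kind of run-length control one needs here.

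Finally, for the converse assertion that each listed word is a \emph{power} of a simple curve, the only subtlety is periodicity. If $[n_1,\dots,n_r]$ is itself a $d$-th power as a necklace (for example $[1,1]$, giving $abab=(ab)^2$), then the associated balanced word is the $d$-th power of the Christoffel word of the underlying primitive slope, hence a proper power of a simple closed curve; otherwise the curve is primitive and simple. This matches the statement and closes both directions.
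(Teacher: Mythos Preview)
The paper does not give its own proof of this theorem: it is quoted from Buser--Semmler as \cite[Theorem~6.2]{BuserSemmler}, and the only further comment is the remark after Proposition~\ref{prop:si1case3} that the Cohen--Lustig linking-pair computations carried out in Section~\ref{section:si1} contain, as a by-product, all the steps needed to deduce Theorem~\ref{thm:bus}. So there is no ``paper's proof'' to compare against in the usual sense; what there is, implicitly, is a combinatorial argument via the Cohen--Lustig algorithm showing that a word $a^{n_1}b\cdots a^{n_k}b$ has no linking pairs precisely when $[n_1,\dots,n_k]$ has small variation.

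Your route is genuinely different and is in fact the one the paper gestures at in Remark~\ref{rmk:stur} and in the paragraph preceding Proposition~\ref{prop:EU}: identify essential non-peripheral simple curves with rational slopes, realize each as the cutting sequence of a Euclidean line on the square torus, and invoke the Sturmian/Christoffel theory to get the balanced (equivalently small-variation) condition. This buys you a short, conceptual argument that simultaneously handles both directions and makes the bijection with $\mathbb{Q}\cup\{\infty\}$ transparent, at the cost of importing the Sturmian machinery (balanced $\Leftrightarrow$ cutting sequence). The Cohen--Lustig route the paper alludes to is longer but entirely self-contained and yields finer control: it tells you exactly which linking pairs appear when small variation fails, which is what drives the self-intersection-one classification in Theorem~\ref{thm:si1}.

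Your outline is sound; two points deserve care when you flesh it out. First, the slope classification you invoke is literally for the closed torus, so you should say explicitly why filling the puncture induces a bijection on isotopy classes of essential non-peripheral simple closed curves (this is standard, but it is where the peripheral class $aba^{-1}b^{-1}$ is separated out). Second, the equivalence ``$[n_1,\dots,n_r]$ has small variation $\Leftrightarrow$ $ab^{n_1}\cdots ab^{n_r}$ is balanced'' is not quite as immediate as your one-line translation suggests, because a window of the binary word can begin and end inside a $b$-run; the clean way to handle this is exactly the run-length control of Lemma~\ref{lemma:gapsize} together with the recursive reduction $A$ the paper uses to prove Proposition~\ref{prop:EU}.
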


\begin{remark}\label{rmk:stur}
    Finite words can be seen as periodic infinite words.
    The small variation condition on the exponents is nothing but the balance condition on words that defines finite Sturmian words in the literature (see for example \cite{Vuillon, Glen-Justin}).
\end{remark}
We say that a simple closed curve on $\Sigma_{1,1}$ has \emph{general type} if it falls into the third case described in Theorem~\ref{thm:bus}.
For such a curve, we associate it with the necklace $[n_1, \dots, n_r]$, which we will refer to as its \emph{exponent necklace}.
See Figure~\ref{fig:circW} and \ref{fig:exp} for an example.

Write $M(L)$ for the set of closed multicurves of general type on $\Sigma_{1,1}$ of length $L$, and $N(L)$ for the set of necklaces $[n_1, \dots, n_r]$ such that $r + \sum_i n_i = L$.
\begin{corollary} \label{cor:necurve}
    For $L$ odd, $M(L)$ and $N(L)$ are in eight-to-one correspondence. For $L$ even, we have $|M(L)| = 8|N(L)| - 4$.
\end{corollary}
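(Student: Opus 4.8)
The plan is to present both $M(L)$ and $N(L)$ as data attached to a single group action and then count orbits. By Theorem~\ref{thm:bus}, every general-type simple multicurve is, after renaming the generators, a circular word $w_\nu = ab^{n_1}ab^{n_2}\cdots ab^{n_r}$ whose exponent necklace $\nu = [n_1,\dots,n_r]$ has small variation and satisfies $r + \sum_i n_i = L$; conversely, every such $w_\nu$ is a simple multicurve. I would introduce the group $G$ of order $8$ generated by the renamings $\alpha\colon a\mapsto a^{-1}$, $\beta\colon b\mapsto b^{-1}$, and $\sigma\colon a\leftrightarrow b$ (the signed permutations of the basis $\{a,b\}$, a copy of the dihedral group $D_4$), which acts on $M(L)$ since ``general type'', ``$i=0$'', and ``length $L$'' are all renaming-invariant. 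Sending a multicurve to its exponent necklace defines a map $\Psi\colon M(L)\to N(L)$, and the whole statement reduces to showing that $\Psi$ is the quotient by $G$ and to computing the $G$-orbit sizes.

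First I would check that $\Psi$ is well defined with fibers equal to the $G$-orbits. The point is that each generator fixes the exponent necklace: $\alpha$ and $\beta$ send $w_\nu$ to $a^{-1}b^{n_1}\cdots$ and $ab^{-n_1}\cdots$, which return to canonical form with the same $\nu$ once $a^{-1}$ (resp.\ $b^{-1}$) is relabeled as the generator, while $\sigma$ sends $w_\nu$ to $ba^{n_1}ba^{n_2}\cdots$, which is canonical with the same necklace $\nu$ once $b$ is read as the first generator. Hence $\Psi$ is constant on orbits, and distinct necklaces lie in distinct orbits because $\nu = \Psi(w_\nu)$ is recovered from any representative. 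Combined with the converse half of Theorem~\ref{thm:bus} (surjectivity of $\Psi$), this shows the canonical words $\{ w_\nu : \nu \in N(L)\}$ form a transversal meeting each orbit exactly once, so $|M(L)| = \sum_{\nu\in N(L)} |G\cdot w_\nu|$.

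Next I would compute the orbit sizes via stabilizers. A renaming fixing the all-positive circular word $w_\nu$ cannot introduce any inverse letters, so it must be $\mathrm{id}$ or $\sigma$; and $\sigma(w_\nu)=ba^{n_1}ba^{n_2}\cdots$ is a rotation of $w_\nu$ only if the two words share the same letter multiset, i.e.\ $r=\sum_i n_i$, which forces every $n_i=1$ and hence $\nu = [1,\dots,1]$ with $r=L/2$. Such a necklace exists exactly when $L$ is even, and then $w_\nu=(ab)^{L/2}$ has stabilizer $\{\mathrm{id},\sigma\}$, so by orbit–stabilizer its orbit has size $|G|/2 = 4$; every other $w_\nu$ has trivial stabilizer and orbit size $8$.

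Finally the count follows: for $L$ odd all orbits have size $8$, so $\Psi$ is an eight-to-one surjection and $|M(L)| = 8|N(L)|$; for $L$ even exactly one orbit (that of $(ab)^{L/2}$) has size $4$ and the rest size $8$, giving $|M(L)| = 8(|N(L)|-1)+4 = 8|N(L)|-4$. The main obstacle is the transversal property of the second paragraph: it rests on the combinatorial rigidity of the Buser--Semmler normal form, namely that the exponent necklace is a complete invariant of a general-type multicurve up to renaming, so that no two distinct necklaces in $N(L)$ yield $G$-equivalent words and each orbit carries a unique canonical representative.
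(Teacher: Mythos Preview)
Your argument is correct and follows essentially the same route as the paper: both identify the eight renamings of generators as the source of the correspondence and single out the necklace $[1,\dots,1]$ (equivalently the curve $(ab)^{L/2}$) as the unique case where two renamings coincide, yielding $4$ rather than $8$ curves. Your packaging via a $G$-action and orbit--stabilizer makes explicit what the paper leaves as a one-line remark. The only ingredient the paper spells out that you invoke implicitly is the passage from curves to multicurves: Theorem~\ref{thm:bus} is stated for simple \emph{curves}, and to cover powers one needs that a power of a necklace with small variation again has small variation, so that every simple multicurve of general type really is of the form $g\cdot w_\nu$ for some $\nu\in N(L)$.
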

\begin{proof}
    First, it is not hard to check that a power of a necklace (the square of $[1,2,3]$ is $[1,2,3,1,2,3]$) with small variation has small variation.
    Thus every power of a (primitive) simple curve of general type is of general type.
    The eight-to-one correspondence in the case where $L$ is odd arises from the $8$ possible renamings of the generators.
    This correspondence breaks down when $L$ is even due to the fact that the necklace $[1,\dots, 1]$ corresponds to only $4$ curves (for example, $ab \cdots ab$ and $ba \cdots ba$ give the same curve).
\end{proof}

Therefore, counting simple closed curves on $\Sigma_{1,1}$ with respect to the word length can be boiled down to the enumeration of necklaces with small variation.
In the next section, we will prove a rigidity result for such necklaces, which allows us to further reduce our curve counting problem to a problem of lattice point counting.

\section{Necklaces with small variation}

The following is the main result of the section. That is, from the periodic Sturmian word perspective, the equivalence in definitions as balanced words mentioned before in Remark \ref{rmk:stur} and as cutting sequences on the integer grid for an infinite ray starting at the origin with rational slope.
This fact is well-known to the experts, however, as far as the knowledge of the authors gets, there seems not to be any good reference on this fact and a proper bijection. In what follows we give a self-contained elementary proof. 

\begin{proposition} \label{prop:EU} 
    Given $m\in\ZZ_{\geq1}$, and x,y $\in\ZZ_{\geq 0}$,
    there exists a unique necklace with small variation that contains exactly $x$ occurrences of the number $m$, and $y$ occurrences of the number $m+1$.
\end{proposition}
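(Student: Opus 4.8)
The plan is to prove existence and uniqueness simultaneously by strong induction on the total number of beads $n = x+y$, using a Euclidean-style descent that ``collapses'' runs. If $\min(x,y)=0$ the necklace must be constant (all $m$ or all $m+1$), which is visibly the unique small-variation necklace with the prescribed counts, so these are the base cases. Since the involution $n_i \mapsto (2m+1)-n_i$ interchanges $m$ and $m+1$, merely negates the quantity inside \eqref{eq:smaVa}, and hence preserves the small-variation condition while swapping $x$ and $y$, I may assume $x \ge y \ge 1$. Set $q = x/y$, $p = \lfloor q \rfloor \ge 1$, and $r = x - py$ with $0 \le r < y$.

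The key reduction is a bijection between small-variation necklaces with $x$ copies of $m$ and $y$ copies of $m+1$, and small-variation necklaces with $y-r$ copies of $p$ and $r$ copies of $p+1$. Let $w$ be any small-variation necklace with the prescribed counts. By Remark~\ref{rem:byDef} every bead is $m$ or $m+1$, and by Lemma~\ref{lemma:gapsize} (using $x \ge y$) every run of $m+1$ has length $1$ while every run of $m$ has length $p$ or $p+1$. Hence $w = m^{a_1}(m+1)\,m^{a_2}(m+1)\cdots m^{a_y}(m+1)$ for uniquely determined $a_i \in \{p,p+1\}$ with $\sum_i a_i = x$, so exactly $r$ of the $a_i$ equal $p+1$ and $y-r$ equal $p$. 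Define $\Phi(w) = [a_1,\dots,a_y]$; conversely, substituting each letter $a$ by the block $m^a(m+1)$ recovers $w$, so $\Phi$ is a bijection between necklaces of this form and necklaces over $\{p,p+1\}$ with the stated counts.

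What remains — and is the crux — is the substitution lemma: $w$ has small variation if and only if $v \coloneqq [a_1,\dots,a_y]$ does. Each superblock $m^{a_i}(m+1)$ has length $a_i+1$ and bead-sum $(a_i+1)m+1$, so any block of $w$ meets a contiguous stretch of whole superblocks together with a bounded piece at each end. Writing the bead-sum of an $s$-block of $w$ as $sm$ plus the number of $(m+1)$'s it contains, I would compare the maximal and minimal such counts over all $s$-blocks to the corresponding extremal window sums of $v$, the boundary corrections being controlled because $a_i+1 \in \{p+1,p+2\}$ is essentially constant. The delicate point is that blocks of $w$ may begin and end in the interior of superblocks, so the equivalence is not term-by-term; this boundary bookkeeping is where I expect the real work to lie, and it is precisely the (de)substitution step familiar from the theory of Sturmian and balanced words.

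Granting the substitution lemma, the induction closes immediately: $v$ is a small-variation necklace over two consecutive positive integers with total length $y < n$, so by the induction hypothesis there is exactly one such $v$, and through the bijection $\Phi$ this yields exactly one admissible $w$, establishing both existence and uniqueness. The descent $(x,y) \mapsto (y-r,\,r)$ is the subtractive Euclidean algorithm on the pair, and it terminates when the remainder $r$ vanishes, i.e.\ at a constant necklace, consistent with the base cases.
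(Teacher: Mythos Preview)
Your descent is exactly the paper's: your map $\Phi$ is the operation $A$ defined just before Lemma~\ref{lem:min}, its inverse is the paper's $B_m$/$\tilde{B}_m$, and both arguments run the same Euclidean induction on (essentially) $\min(x,y)$. The divergence is precisely at the step you flag as unfinished. The paper does \emph{not} carry out any window-counting for your ``substitution lemma''; instead it observes that $A$ and $B_m$ are induced on exponent necklaces by the $\mathbf{F}_2$-automorphisms $\alpha_m$, $\tilde{\alpha}_m$ of \eqref{eq:auto}, which fix the peripheral conjugacy class and hence, by Dehn--Nielsen--Baer, are realized by self-homeomorphisms of $\Sigma_{1,1}$. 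Thus they preserve self-intersection number (Lemma~\ref{lem:red}), and since Theorem~\ref{thm:bus} says ``small variation $\Leftrightarrow$ simple'' for words of this shape, preservation of small variation under $A$ and $B_m$ comes for free from the topology, with no boundary bookkeeping at all.

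Your purely combinatorial route would, if completed, give a self-contained proof independent of Theorem~\ref{thm:bus} and of the surface $\Sigma_{1,1}$ --- a genuine gain in this context. But as you yourself say, the substitution lemma is the entire crux, and your proposal contains only a sketch. The boundary analysis can be done (it is the standard desubstitution step for balanced/Sturmian words), but it is not a formality: blocks of $w$ of a fixed length $s$ straddle varying numbers of superblocks, and one has to argue carefully that the count of $(m{+}1)$'s still varies by at most one. Until that is written out, your argument is incomplete exactly where the paper's is complete by a different, topological, mechanism.
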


The proof proceeds by induction, where the operations on necklaces we are defining now play an important role.

Let us start with the following automorphisms of $\mathbf{F}_2$ defined for any $m \in \ZZ_{\geq 1}$ by
\begin{equation} \label{eq:auto}
    \alpha_m :
    \begin{array}{ll}
        a^m b & \mapsto b,\\
        a^{m+1} b & \mapsto a,
    \end{array}
    \qquad
    \tilde{\alpha}_m :
    \begin{array}{ll}
        a^m b & \mapsto a,\\
        a^{m+1} b & \mapsto b.
    \end{array}
\end{equation}
They are well-defined because $a^m b$ and $a^{m+1} b$ form a basis of $\mathbf{F}_2$.

The following elementary lemma will be important for our purposes.
\begin{lemma} \label{lem:red}
    For any word $w$ and any $m \in \ZZ_{\geq 1}$, the curves represented by $w$, $\alpha_m(w)$, $\tilde{\alpha}_m(w)$, $\alpha_m^{-1}(w)$, and $\tilde{\alpha}_m^{-1}(w)$ have the same self-intersection number.
\end{lemma}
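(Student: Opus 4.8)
The plan is to deduce the lemma from the topological invariance of the geometric self-intersection number. Since $i(\gamma)$ is defined as a minimum over all representatives of a free homotopy class, it is preserved by any self-homeomorphism $f$ of $\Sigma_{1,1}$: such an $f$ carries each representative of $\gamma$ bijectively to a representative of the image class, creating or destroying no transverse double points, so the minimal count is unchanged. Under the identification of $\cC^*(\Sigma_{1,1})$ with conjugacy classes in $\mathbf{F}_2 = \pi_1(\Sigma_{1,1})$, the induced map $f_*$ is exactly the outer automorphism determined by $f$. Hence it suffices to show that $\alpha_m$, $\tilde\alpha_m$, and their inverses are induced by self-homeomorphisms of $\Sigma_{1,1}$, i.e.\ that their outer classes lie in the image of $\mathrm{Mod}^{\pm}(\Sigma_{1,1}) \to \mathrm{Out}(\mathbf{F}_2)$.

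For $\mathbf{F}_2$ this image is everything, and that is the crux. Abelianization gives an isomorphism $\mathrm{Out}(\mathbf{F}_2) \xrightarrow{\sim} \mathrm{GL}_2(\ZZ)$, while the Dehn--Nielsen--Baer theorem identifies $\mathrm{GL}_2(\ZZ)$ with $\mathrm{Mod}^{\pm}(\Sigma_{1,1})$; the point special to the once-punctured torus is that \emph{every} automorphism of $\mathbf{F}_2$ preserves the peripheral conjugacy class $\{[a,b]^{\pm 1}\}$, so every outer automorphism is geometric. Concretely, I would first invert the defining formulas to get the images of the generators, finding $\alpha_m(a) = ab^{-1}$ and $\alpha_m(b) = (ba^{-1})^m b$ (and the symmetric formulas for $\tilde\alpha_m$); on homology $\alpha_m$ is then represented by the matrix $\left(\begin{smallmatrix} 1 & -m \\ -1 & m+1 \end{smallmatrix}\right)$, of determinant $1$. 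Since this lies in $\mathrm{SL}_2(\ZZ) \cong \mathrm{Mod}(\Sigma_{1,1})$, the outer class of $\alpha_m$ is realized by an orientation-preserving homeomorphism; the same computation disposes of $\tilde\alpha_m$, and the inverse of a geometric automorphism is again geometric.

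Combining the two steps, each of $\alpha_m$, $\tilde\alpha_m$, $\alpha_m^{-1}$, $\tilde\alpha_m^{-1}$ is induced by a self-homeomorphism of $\Sigma_{1,1}$ and therefore preserves $i$, which yields the claimed equality of self-intersection numbers for every word $w$ (reducedness of $w$ or of its image is irrelevant, as $i$ depends only on the conjugacy class). The only non-formal inputs are the homeomorphism-invariance of $i$ and the realization isomorphism $\mathrm{Out}(\mathbf{F}_2)\cong\mathrm{Mod}^{\pm}(\Sigma_{1,1})$, both classical; I expect the single point needing care to be the bookkeeping that $\alpha_m$ genuinely preserves the peripheral class (equivalently, that its homology matrix has determinant $1$), which the explicit computation above settles. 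A fully self-contained alternative would be to exhibit $\alpha_m$ as an explicit word in the Dehn twists about $a$ and $b$, which generate $\mathrm{Mod}(\Sigma_{1,1})$; this, however, only reproves the realization statement and is not needed if one is willing to cite it.
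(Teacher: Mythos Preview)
Your argument is correct and follows essentially the same route as the paper: both verify that $\alpha_m$ and $\tilde\alpha_m$ preserve the peripheral conjugacy class $\{[a,b]^{\pm1}\}$ and then invoke the Dehn--Nielsen--Baer theorem to realize them by self-homeomorphisms of $\Sigma_{1,1}$, whence invariance of $i$. The paper simply asserts the preservation of the peripheral class and cites DNB, whereas you supply the explicit check via the abelianized matrix (and note the stronger general fact that every element of $\mathrm{Out}(\mathbf{F}_2)$ is geometric); this is extra detail rather than a different idea.
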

\begin{proof}
The automorphisms $\alpha_m$ and $\tilde{\alpha}_m$ preserve the set of conjugacy classes that correspond to the puncture (the conjugacy class of $aba^{-1}b^{-1}$ and the conjugacy class of its inverse $bab^{-1}a^{-1}$).
Now the Dehn--Nielsen--Baer theorem implies that the actions of $\alpha_m$ and $\tilde{\alpha}_m$ on
conjugacy classes of $\mathbf{F}_2 \simeq \pi_{1}(\Sigma_{1,1})$ (circular words and free homotopy classes of curves) are induced by some self-homeomorphisms of $\Sigma_{1,1}$.
The lemma follows.
\end{proof}

Let $w$ be a necklace with small variation.
Recall that we write $|w|_m$ for the number of occurrences of $m$ in $w$.
We define a new necklace $A(w)$ as follows.
By Remark~\ref{rem:byDef}, there exists $m \in \ZZ_{\geq 1}$ such that $n_i \in \{ m, m+1 \}$ for all $i$.
If $|w|_m \leq |w|_{m+1}$ (resp.\ $|w|_{m+1} \leq |w|_m$),
we define $A(w)$ to be the necklace obtained by removing all the $m$'s (resp.~$(m+1)$'s) from $w$ and replacing each run of $m+1$ (resp.~$m$) by the length of the run.
For example, $A[4,5,5,4,5,5,5,4,5,5] = [2,3,2]$ and $A[4,5,5,5,4,5,4,5,5] = [3,1,2]$.
Note that if $|w|_m = |w|_{m+1}$, then $A(w)$ is a constant necklace $[1, \dots, 1]$ of size $|w|_m=|w|_{m+1}$.

This operation can be defined equivalently as follows.
Let $w = [n_1, \dots, n_r]$, and consider the word $\omega = a^{n_1} b \cdots a^{n_r} b$.
If $|w|_m \leq |w|_{m+1}$ (resp.\ $|w|_{m+1} \leq |w|_m$), then $A(w)$ is the exponent necklace of $\alpha_m(\omega)$ (resp.\ $\tilde{\alpha}_m(\omega)$).

With certain supplementary information, the map $A$ can be reversed.
We define $B_m(w)$ (resp.\ $\tilde{B}_m(w)$) to be the necklace obtained by replacing each $n_i$ by a run of $m+1$ (resp.\ $m$) of length $n_i$ and inserting a $m$ (resp.\ $m+1$) between every two consecutive runs of $m+1$ (resp.\ $m$).
For example, $B_{3}[1,1,3,2] = [4,3,4,3,4,4,4,3,4,4,3]$.
In other words, if $|w|_m \leq |w|_{m+1}$ (resp.\ $|w|_{m+1} \leq |w|_m$), then $B_m(w)$ is the exponent necklace of $\alpha_m^{-1}(\omega)$ (resp.\ $\tilde{\alpha}_m^{-1}(\omega)$).
Now, by construction, if $w$ is a necklace with small variation, then $B_{m}(A(w)) = w$ if $|w|_m \leq |w|_{m+1}$, and $\tilde{B}_m(A(w)) = w$ if $|w|_m \geq |w|_{m+1}$.

\begin{figure}[ht]
\centering
\begin{subfigure}[b]{0.32\textwidth}
\centering
\includegraphics[width=.8\textwidth]{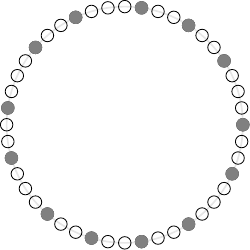}
\caption{A circular word}
\label{fig:circW}
\end{subfigure}
\begin{subfigure}[b]{0.32\textwidth}
\centering
\includegraphics[width=.8\textwidth]{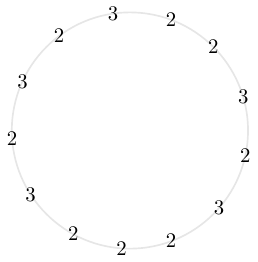}
\caption{Its exponent necklace}
\label{fig:exp}
\end{subfigure}
\begin{subfigure}[b]{0.32\textwidth}
\centering
\includegraphics[width=.8\textwidth]{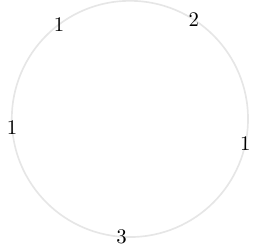}
\caption{Image of $A$}
\label{fig:Aexp}
\label{mingkun}
\end{subfigure}
\end{figure}

A necklace $w = [n_i]_i$ is said to have \emph{profile} $(m,x,y)$, if $n_i \in \{ m, m+1 \}$ for all $i$, $|w|_m = x$, and $|w|_{m+1} = y$.

Note now that by Lemma \ref{lemma:gapsize}, if $\min(x,y)$ divides $\max(x,y)$, then Proposition \ref{prop:EU} follows immediately.

\begin{lemma} \label{lem:min}
    Let $m,x,y \in \ZZ_{\geq 1}$, and $w_1, w_2$ be two necklaces with the same profile $(m,x,y)$.
    Then $A(w_1)$ and $A(w_2)$ have the same profile $(m', x', y')$ where
    \begin{equation} \label{eq:Gmxy}
        \begin{split}
            m' & =  \lfloor \max(x, y)/\min(x, y) \rfloor, \\
            x' & =  \min(x, y) - \max(x, y) + \min(x, y) \lfloor \max(x, y)/\min(x, y) \rfloor, \\
            y' & =  \max(x, y) - \min(x, y) \lfloor \max(x, y)/\min(x, y) \rfloor,
        \end{split}
    \end{equation}
    which depends only on $(x, y)$.
    Moreover, we have $0 \leq \min(x', y') < \min(x, y)$.
\end{lemma}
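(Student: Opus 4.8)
The plan is to reduce to the case $x \geq y$ by symmetry, read off the structure of $A(w)$ from the gap-size lemma, and then pin down the new profile by pure counting. Swapping the roles of $m$ and $m+1$ (equivalently swapping $x$ and $y$) interchanges the two branches in the definition of $A$ via $\alpha_m$ and $\tilde\alpha_m$, but leaves the formulas in \eqref{eq:Gmxy} unchanged, since these are written in terms of $\min(x,y)$ and $\max(x,y)$. Hence I would assume without loss of generality that $x \geq y$, so that $\min(x,y) = y$ and $A(w)$ is formed by recording the lengths of the runs of $m$.

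First I would fix the combinatorial shape of $w$. Writing $q \coloneqq x/y$, Lemma~\ref{lemma:gapsize} gives that every run of $m+1$ has size $1$ and every run of $m$ has size $\floor{q}$ or $\ceil{q}$. Since each of the $y$ occurrences of $m+1$ is then its own run, $w$ has exactly $y$ runs of $m+1$; because runs of the two symbols alternate around the circle and both symbols are present (as $x,y\geq 1$), $w$ also has exactly $y$ runs of $m$. Consequently $A(w)$, whose entries are precisely the lengths of these runs of $m$, has exactly $y = \min(x,y)$ entries, each equal to $\floor{q}$ or $\floor{q}+1$, and their sum equals the total number of $m$'s, namely $x = \max(x,y)$. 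In particular the base value is $m' = \floor{q} = \floor{\max(x,y)/\min(x,y)}$, independently of the chosen $w$.

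Next I would recover the two counts by solving a $2\times 2$ linear system. Let $x' = |A(w)|_{m'}$ and $y' = |A(w)|_{m'+1}$. The observations above give $x' + y' = \min(x,y)$ and $m' x' + (m'+1) y' = \max(x,y)$. Subtracting $m'$ times the first equation from the second yields $y' = \max(x,y) - m'\min(x,y)$, and then $x' = \min(x,y) - y'$; substituting $m' = \floor{\max(x,y)/\min(x,y)}$ reproduces exactly the expressions in \eqref{eq:Gmxy}. As these depend only on $(x,y)$, the necklaces $A(w_1)$ and $A(w_2)$ have the same profile $(m',x',y')$. (When $\min(x,y)$ divides $\max(x,y)$ one gets $y'=0$ and $A(w)$ is the constant necklace, consistent with the remark preceding the statement.)

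Finally, for the ``moreover'' part, I would note that $y' = \max(x,y) - \floor{\max(x,y)/\min(x,y)}\,\min(x,y)$ is exactly $\max(x,y) \bmod \min(x,y)$, whence $0 \leq y' < \min(x,y)$. Since $x',y' \geq 0$, we obtain $0 \leq \min(x',y') \leq y' < \min(x,y)$, as claimed. I expect the only genuine content to lie in the run-counting step: no matter how the two symbols are arranged, the small-variation hypothesis (through Lemma~\ref{lemma:gapsize}) forces exactly $\min(x,y)$ runs of the majority symbol, so the output length---and therefore the entire profile---is rigidly determined; everything afterwards is arithmetic bookkeeping.
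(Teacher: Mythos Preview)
Your proposal is correct and follows essentially the same route as the paper: invoke Lemma~\ref{lemma:gapsize} to fix $m'=\lfloor\max(x,y)/\min(x,y)\rfloor$, observe that $A(w)$ has $\min(x,y)$ entries summing to $\max(x,y)$, and solve the resulting $2\times 2$ linear system. Your alternation argument for the run count and the $\max\bmod\min$ observation for the final inequality are slightly more explicit than the paper's ``by construction'' and ``direct computation'', but the underlying ideas are identical.
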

\begin{proof}
    Let $(m', x', y')$ be the profile of $A(w_1)$. Note that $m'$ is determined by Lemma~\ref{lemma:gapsize}, the total size of $A(w_1)$ is given by construction by $\min(x,y)$ and the sum of all the elements in $A(w_1)$ is given by construction by $\max(x,y)$. Hence, $(m', x', y')$ is determined by the system
    \[
        m' = \lfloor \max(x, y)/\min(x, y) \rfloor,
        \quad
        x' + y' = \min(x, y),
        \quad
        x' m' + y'(m'+1) = \max(x, y).
    \]
    These equations have a unique solution given by \eqref{eq:Gmxy}.
    A direct computation shows that
    \[
        x'
        \leq
        \min(x, y),
        \qquad
        y'
        \leq
        \min(x, y)
    \]
    and $x' = \min(x, y)$ if and only if $\min(x, y)$ divides $\max(x, y)$, in which case $y' = 0$ and hence $y' < \min(x, y)$.
    The lemma follows.
\end{proof}
Now, we are ready to the main result of the section.
\begin{proof}[Proof of Proposition~\ref{prop:EU}]
    The assertion is evident if $\min(x,y) = 0$.
    Assume $x, y \geq 1$.
    We proceed by induction on $\min(x,y)$.
    Suppose that the proposition holds for all triples $(m,x,y) \in \ZZ_{\geq 1}^3$ where $\min(x,y) \leq k$.
    We will prove that the proposition holds for all $(m,x,y) \in \ZZ_{\geq 1}^3$ such that $\min(x,y) = k+1$.
    
    By Lemma~\ref{lem:min} and the induction hypothesis, there exist a unique necklace $w'$ with profile $(m', x', y')$, defined by \eqref{eq:Gmxy}.
    By Lemma~\ref{lem:red}, if $x \geq y$ (resp.\ $x \leq y$), then $B_m(w')$ represents a simple curve with profile $(m,x,y)$.
    This proves the existence.
    The uniqueness follows from Lemma~\ref{lem:min} the fact that $A$ is injective (by Lemma~\ref{lem:red}).
    This completes the proof.
\end{proof}

\section{Counting}

We will start by counting the curves of general type, which means it can be represented by a word of the form $ab^{n_1}ab^{n_2}\cdots ab^{n_r}$, and then at the rest of the curves. Note that such a curve is determined by its exponent necklace $w=[n_1,\hdots, n_r]$.

Now, if $n_i\in\{m,m+1\}$, the curve $\gamma=ab^{n_1}ab^{n_2}\cdots ab^{n_r}$ has word length $|w|_m(m+1)+|w|_{m+1}(m+2)$.

\begin{proposition}\label{prop:4l1}
    For any positive integer $L$, there are exactly $4(L-1)$ simple multicurves of general type and length $L$.
\end{proposition}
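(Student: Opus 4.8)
The plan is to reduce the count of multicurves to a count of necklaces with small variation, and then enumerate the latter using the existence-and-uniqueness statement of Proposition~\ref{prop:EU}.

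First I would invoke the correspondence already established. By Theorem~\ref{thm:bus} together with Corollary~\ref{cor:necurve}, a multicurve of general type and length $L$ is, up to the eight renamings of the generators, the same datum as a necklace with small variation $w = [n_1, \dots, n_r]$ with $r + \sum_i n_i = L$; writing $N(L)$ for the set of such necklaces (understood here, as the correspondence in Corollary~\ref{cor:necurve} requires, to be the small-variation ones), one has $|M(L)| = 8|N(L)|$ when $L$ is odd and $|M(L)| = 8|N(L)| - 4$ when $L$ is even. Hence it suffices to prove the clean formula
\[
|N(L)| = \floor{L/2}.
\]

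The key step is to show that a necklace with small variation is rigidly determined by the pair $(r, S)$, where $r$ is its number of entries and $S \coloneqq \sum_i n_i$ is the sum of its entries; note $r + S = L$ and, since each $n_i \geq 1$, that $1 \leq r \leq S$. I would argue that for every pair $(r, S)$ with $1 \leq r \leq S$ there is exactly one necklace with small variation having $r$ entries of sum $S$. Writing $S = qr + t$ with $0 \leq t < r$ (so $q \geq 1$ because $S \geq r$), when $t = 0$ the only candidate is the constant necklace $[q, \dots, q]$, which trivially has small variation; when $t \geq 1$ the desired necklace must have profile $(q, r-t, t)$ by Remark~\ref{rem:byDef} and Lemma~\ref{lemma:gapsize}, and Proposition~\ref{prop:EU} provides a unique such necklace. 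Conversely, every element of $N(L)$ arises this way from its own pair $(r, S)$, so the assignment $w \mapsto (r, S)$ is a bijection from $N(L)$ onto $\{(r, S) : r + S = L,\ 1 \leq r \leq S\}$.

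It then remains to count these pairs. The constraint $r \leq S = L - r$ is equivalent to $r \leq \floor{L/2}$, and $r$ ranges over $1, \dots, \floor{L/2}$, so $|N(L)| = \floor{L/2}$. Substituting into the two cases of Corollary~\ref{cor:necurve} gives, for $L$ odd, $|M(L)| = 8 \cdot \tfrac{L-1}{2} = 4(L-1)$, and for $L$ even, $|M(L)| = 8 \cdot \tfrac{L}{2} - 4 = 4(L-1)$, so both parities collapse to the claimed value. The main obstacle I anticipate is the key step, namely justifying that each pair $(r, S)$ yields a \emph{unique} small-variation necklace: this is exactly where Proposition~\ref{prop:EU} does the work, but one must treat the degenerate boundary case $t = 0$ (the constant necklaces, where the profile $(m, x, y)$ is not unique) separately so as not to miscount. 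Everything else is bookkeeping, with the $-4$ in the even case precisely absorbing the overcount coming from the all-ones necklace.
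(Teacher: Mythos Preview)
Your proof is correct and shares the paper's overall architecture: reduce via Corollary~\ref{cor:necurve} to counting small-variation necklaces, then use Proposition~\ref{prop:EU} to rigidify. The difference is in the final enumeration. The paper parametrizes necklaces by their profile $(m,x,y)$ and then invokes a separate result, Proposition~\ref{prop:=}, which counts solutions to $x(m+1)+y(m+2)=L$ via explicit bijections $\Lambda_\pm$ between the solution sets for consecutive values of $L$. You instead parametrize by the pair $(r,S)=(\text{size},\text{sum})$, observe that the profile is recovered by Euclidean division of $S$ by $r$, and read off $|N(L)|=\lfloor L/2\rfloor$ directly from the constraint $1\le r\le L-r$. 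Your route is shorter and, as a byproduct, furnishes a one-line proof of Proposition~\ref{prop:=}: the map $(x,y,m)\mapsto(x+y,\,mx+(m+1)y)$ is a bijection from the solution set onto $\{(r,S):r+S=L,\ 1\le r\le S\}$, with inverse given by division with remainder. The paper's longer approach, however, sets up the Diophantine framework reused later for the self-intersection-one count (Proposition~\ref{prop:='}). One minor remark: Lemma~\ref{lemma:gapsize} is not actually needed in your key step; Remark~\ref{rem:byDef} together with the arithmetic $mx+(m+1)y=S$, $x+y=r$ already forces the profile.
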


To prove so we first prove a proposition on our Diophantine equations.

\begin{proposition} \label{prop:=}
    Consider the equation
    \begin{equation} \label{eq:K}
        x(m+1) + y(m+2) = L
    \end{equation}
    where $L \in \ZZ_{\geq 1}$ is given, and $x,m \in \ZZ_{\geq 1}$, $y \in \ZZ_{\geq 0}$ are unknown.
    Then, there are exactly $L/2$ solutions $(x,y,m)$ if $L$ is even, and $(L-1)/2$ solutions if $L$ is odd.
\end{proposition}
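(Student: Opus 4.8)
The plan is to count solutions $(x,y,m)$ to the linear Diophantine equation $x(m+1)+y(m+2)=L$ with $x,m\geq 1$ and $y\geq 0$ by stratifying according to the value of $m$. For each fixed $m\in\ZZ_{\geq 1}$, the equation becomes $x(m+1)+y(m+2)=L$, a linear equation in the two nonnegative (resp.\ positive) unknowns $x,y$; I would count how many admissible pairs $(x,y)$ exist for that $m$, then sum over all $m$ for which at least one solution is possible. Since $x\geq 1$ and $y\geq 0$, the smallest contribution to $L$ for a given $m$ is $x(m+1)\geq m+1$, so we need $m+1\leq L$, giving the range $1\leq m\leq L-1$ as the outer bound; I expect the effective range to be considerably smaller once the constraint $x(m+1)+y(m+2)=L$ with $x\geq 1$ is imposed.

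First I would analyze, for fixed $m$, the solution set of $x(m+1)+y(m+2)=L$ in $x\geq 1$, $y\geq 0$. Writing it as $(x+y)(m+1)+y=L$, one sees that $y\equiv L \pmod{m+1}$ pins down $y$ modulo $m+1$, and then $x$ is determined. Rather than pushing this residue bookkeeping to the end, the cleaner route is to reparametrize: set $s=x+y$ (the ``number of terms'', i.e.\ the size of the associated necklace) so that the equation reads $s(m+1)+y=L$ with $0\leq y\leq s$ and $x=s-y\geq 1$, i.e.\ $y\leq s-1$. Thus for each $m$ I must count pairs $(s,y)$ with $s\geq 1$, $s(m+1)\leq L$, and $0\leq y=L-s(m+1)\leq s-1$. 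Since $y$ is forced once $s$ and $m$ are chosen, this reduces to counting the integers $s$ satisfying $s(m+1)\leq L\leq s(m+1)+s-1=s(m+2)-1$, that is $L/(m+2)<s\leq L/(m+1)$.

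The core of the argument is therefore the combinatorial identity obtained by summing the number of integers in each half-open interval $\bigl(L/(m+2),\,L/(m+1)\bigr]$ over $m\geq 1$. The union of these intervals as $m$ ranges over $\ZZ_{\geq 1}$ telescopes: the intervals $(L/(m+2),L/(m+1)]$ partition the half-line $(0,L/2]$ (taking $m=1$ gives the top interval $(L/3,L/2]$, and the right endpoints $L/(m+1)$ coincide with the left endpoints of the next interval). Hence the total count equals the number of integers $s$ with $1\leq s\leq L/2$, with possible boundary corrections at the endpoints $L/(m+1)$ where $s=L/(m+1)$ is an integer (the value $y=0$ case). The number of integers in $(0,L/2]$ is $\lfloor L/2\rfloor$, which is exactly $L/2$ when $L$ is even and $(L-1)/2$ when $L$ is odd, matching the claim.

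The main obstacle I anticipate is the careful treatment of the interval endpoints, to make sure no solution is double-counted and none is missed when $s=L/(m+1)$ is an integer (equivalently, the boundary $y=0$, where a solution sits simultaneously at the right end of one interval). The telescoping partition of $(0,L/2]$ works precisely because an integer $s_0=L/(m+1)$ lying at the shared endpoint belongs to the interval indexed by $m$ (where it gives $y=0$, permitted since $y\geq 0$) and not to the interval indexed by $m+1$ (where it would force $y=L-s_0(m+2)=-s_0<0$, forbidden); so the half-open convention $(L/(m+2),L/(m+1)]$ assigns each integer in $(0,L/2]$ to exactly one $m$. I would verify this dichotomy explicitly and confirm that $s=L/2$ (the only possibly-integral endpoint of the whole range, occurring for $m=1$ when $L$ is even) is correctly included, giving the extra unit that distinguishes the even case $L/2$ from the odd case $(L-1)/2$. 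Once the endpoint accounting is pinned down, the count is immediate.
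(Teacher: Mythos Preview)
Your argument is correct and complete in substance: the substitution $s=x+y$ turns the equation into $s(m+1)+y=L$ with $0\le y\le s-1$, so for each $m$ the admissible $s$ are exactly the integers in $(L/(m+2),\,L/(m+1)]$; these half-open intervals partition $(0,L/2]$, and the total count is $\lfloor L/2\rfloor$. Equivalently, each $s\in\{1,\dots,\lfloor L/2\rfloor\}$ determines a unique $m=\lfloor L/s\rfloor-1\ge 1$, and then $y=L-s(m+1)$ and $x=s-y$ are forced. Your endpoint discussion is right: the half-open convention assigns the shared endpoint $L/(m+1)$ (the $y=0$ case) to the interval indexed by $m$ only.

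This is a genuinely different route from the paper's proof. The paper argues by induction on $L$: it builds an explicit map $\Lambda_+\colon S(L)\to S(L+1)$, sending $(x,y,m)$ to $(x-1,y+1,m)$ when $x\ge 2$ and to $(y+1,0,m+1)$ when $x=1$, together with a left inverse $\Lambda_-$, and then checks that $\Lambda_+$ is a bijection when $L$ is even and misses exactly the single solution $(x,0,1)$ with $2x=L+1$ when $L$ is odd. Your approach is a direct count via the reparametrization $s=x+y$ and a telescoping of intervals; it is shorter and yields $\lfloor L/2\rfloor$ in one stroke. The paper's bijective map, on the other hand, makes visible how solutions for consecutive $L$ correspond (it is essentially the ``add one bead'' operation on the associated necklaces), which fits the combinatorial narrative of the surrounding sections. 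Either proof is fine; yours is the more economical of the two.
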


\begin{proof}
    Let $S(L)$ denote the set of triples $(m,x,y) \in \ZZ_{\geq 1}^2 \times \ZZ_{\geq 0}$ satisfying \eqref{eq:K}.
    Since $|S(2)| = 1$, it suffices to prove that for any $L \in \ZZ_{\geq 2}$, $|S(L+1)| = |S(L)|$ if $L$ is even, and $|S(L+1)| = |S(L)| + 1$ if $L$ is odd.
    The strategy is to show that there exists a map $\Lambda_+ \colon S(L) \to S(L+1)$ such that, when $L$ is even, $\Lambda_+$ is a bijection, and when $L$ is odd, $\Lambda_+$ is an injection and $|\Lambda_+(S(L))| = |S(L+1)|-1$.

    Let us define $\Lambda_+ \colon \ZZ^3 \to \ZZ^3$ by the formula
    \[
        \Lambda_+(x,y,m)
        \coloneqq
        \begin{cases}
            (x-1,y+1,m) & \text{if } x \geq 2, \\
            (y+1,0,m+1) & \text{if } x = 1.
        \end{cases}
    \]
    A direct computation shows that $\Lambda_+(S(L)) \subset S(L+1)$.
    Now consider the map $\Lambda_- \colon \ZZ^3 \to \ZZ^3$ defined by
    \[
        \Lambda_-(x,y,m)
        \coloneqq
        \begin{cases}
            (x+1,y-1,m) & \text{if } y \geq 1, \\
            (1,x-1,m-1) & \text{if } y = 0.
        \end{cases}
    \]
    Again, a straightforward computation shows that $\Lambda_-(S(L+1)) \subset S(L)$, and the composition $\Lambda_- \circ \Lambda_+ \colon S(L) \to S(L)$ is the identity.
    In particular, the restriction of $\Lambda_+$ on $S(L)$ is an injection.
    Now, observe that $(x,y,m) \in S(L+1)$ and $\Lambda_-(x,y,m) \notin S(L)$ if and only if $(y,m) = (0,1)$,
    and there exists $x \in \ZZ_{\geq 1}$ such that $(x,0,1) \in S(L+1)$ if and only if $L+1$ is even.
    This completes the proof.
\end{proof}
\begin{proof}[Proof of Proposition \ref{prop:4l1}]
    This follows immediately from Corollary~\ref{cor:necurve}, Proposition~\ref{prop:EU} and \ref{prop:=}.
\end{proof}

Now we are ready to prove the simple curve counting result.
\begin{proof}[Proof of Theorem~\ref{mainth1}]
Define 
\begin{equation} \label{eq:M}
    M(n)
    \coloneqq
    | \{ \gamma \in \cC(\Sigma_{1,1}) \mid i(\gamma) = 0, \, \gamma \text{ is of general type},\, \ell_\omega(\gamma) = n \} |.
\end{equation}
By Proposition \ref{prop:4l1}, $m(n) = 4(n-1)$. Define also
\[
    P(n)
    \coloneqq
    | \{ \gamma \in \Pp\cC(\Sigma_{1,1}) \mid i(\gamma) = 0, \, \gamma \text{ is of general type},\, \ell_\omega(\gamma) = n \} |.
\]
By definition, $M(n) = \sum_{d \mid n} P(d)$. Applying the Möbius inversion formula we obtain
\[
    P(n)
    =
    \sum_{d \mid n} \mu(d) \, M(n/d)
\]
where $\mu(d)$ stands for the Möbius function. Using the arithmetic identities
\[
    \sum_{d \mid n} \frac{\mu(d)}{d}
    =
    \frac{\varphi(n)}{n},
    \qquad
    \sum_{d \mid n} \mu(d)
    =
    \begin{cases}
        1 & \text{if } n = 1, \\
        0 & \text{if } n \geq 2,
    \end{cases}
\]
where $\varphi$ is Euler's totient function,
we obtain for $n\geq1$,
\[
P(n)=4\varphi(n)-4\delta_{\{1\}}(n),
\]
where $\delta_{\{1\}}(n)=0$ for $n>1$ and $\delta_{\{1\}}(1)=1$.
After summing rewrites as:
\[
    \sum_{n=1}^{L} P(n)
    =
    4\Phi(L) - 4.
\]
Adding the four essential simple curves of non-general type: $a$, $b$, $a^{-1},b^{-1}$, the first part of the theorem follows.

For the second part, by Proposition \ref{prop:4l1}, we know that the number of powers of simple closed curves of general type with word length exactly $L$ is $M(L)= 4(L-1)$. Hence,
    \[
        \sum_{n=1}^{L} M(n)
        = \sum_{n=1}^L 4(n-1)=2(L^2-L).
\]
    Adding the words of type $a^k$, $b^k$, $a^{-k}$, $b^{-k}$ for $k=1,\hdots,L$, the theorem follows.
\end{proof}

\part{Self-intersection one} \label{section:si1}
In this section, we classify self-intersection one curves and derive a formula for the number of such curves of a given length.

\section{Characterization}
The aim of this section is to prove Theorem~\ref{thm:si1}.

One of the main tools for the section will be the algorithm to determine self-intersection proposed by Cohen and Lustig \cite{Cohen-Lustig} built on the work of Birman and Series \cite{Birman-Series}.
We will shortly introduce our notation on it right now.

Let $\omega = x_1 \cdots x_n$, where $x_i \in \{ a, b, a^{-1}, b^{-1} \}$ be a reduced word.
Consider all of its circular shifts $\{ \omega_i \}_{i=1}^{n}$ where $\omega_i = x_i x_{i+1} \cdots x_n x_1 x_2 \cdots x_{i-1}$ and the cyclic lexicographic ordering induced by $a<b<a^{-1}<b^{-1}$: two words $\alpha = x_{1} \cdots x_{n}$ and $\beta = y_1 \cdots y_n$ satisfy $\alpha < \beta$ if either $x_{1} < y_{1}$, or for some $1 < t \leq n$, we have $x_{i} = y_{i}$ for $1 \leq i \leq t-1$, and $x_{t} < y_{t}$ under the new ordering obtained by cyclically shifting the original one until it starts with $x_{t-1}^{-1}$.

For example, $baab^{-1} < baab$.
We will call $(i,j) \in \{1, \dots, n \}^2$ a \emph{linking pair} if $\omega_i<\omega_j<\omega_i^{-1}<\omega_j^{-1}$ or $\omega_i<\omega_j^{-1}<\omega_i^{-1}<\omega_j$.
Consider the set of linking pairs with the equivalence relation induced by $(i,j)\sim (i+1,j+1)$ if $\omega_i$ and $\omega_j$ start with the same letter and same sign, and $(i+1,j)\sim (i,j+1)$ if $\omega_i$ and $\omega_j$ start with the same letter and opposite sign.
Cohen and Lustig proved that the self-intersection number of the curve represented by $\omega$ equals the number of equivalence classes of linking pairs found this way.
We refer the reader to \cite{Cohen-Lustig} for more details.

\begin{proposition} \label{prop:si1case2}
Let $\omega = a^{n_1} b^{m_1} \cdots a^{n_k} b^{n_k}$ be a reduced word representing a curve with a single self-intersection.
If there exist $i, j \in \{1, \dots, k \}$ such that $|n_i|, |m_j| \geq 2$, then up to renaming of the generators and circular shift, the only possible $\omega$ is $a^2 b^2$.
\end{proposition}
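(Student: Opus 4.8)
The plan is to use the Cohen--Lustig linking-pair count as the sole tool: the self-intersection number of the curve represented by $\omega$ equals the number of equivalence classes of linking pairs among the cyclic shifts $\omega_1,\dots,\omega_n$. Since the curve has $i(\gamma)=1$, there is exactly one such class, and I aim to contradict this whenever $\omega\neq a^2b^2$. First I would normalize: renaming the generators (which is induced by a self-homeomorphism of $\Sigma_{1,1}$ and hence preserves the self-intersection number, as in Lemma~\ref{lem:red}) and applying a cyclic shift, I may assume that $\omega$ contains a positive run $a^s$ with $s\ge 2$ coming from the hypothesized block $|n_i|\ge 2$, and a positive run $b^t$ with $t\ge 2$ coming from $|m_j|\ge 2$.

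The core of the argument is a \emph{local} description of the cyclic lexicographic order near a long run. Within the run $a^s$, occupying positions $p,p+1,\dots,p+s-1$, the shifts $\omega_{p},\dots,\omega_{p+s-1}$ all begin with $a$ and are nested according to the number of leading $a$'s; I would show that they form a contiguous, monotone block in the cyclic order, and dually that the inverse shifts form a contiguous block among the $a^{-1}$-starting words. The same holds for the run $b^t$ and its inverses. Using the sign-dependent re-basing rule in the definition of the order (after a prefix ending in $x_{t-1}$ one rebases the alphabet to start at $x_{t-1}^{-1}$), I would verify that for each pair consisting of a \emph{non-initial} position $p+\alpha$ of the $a$-run ($1\le\alpha\le s-1$) and a \emph{non-initial} position $q+\beta$ of the $b$-run ($1\le\beta\le t-1$), the four words $\omega_{p+\alpha},\omega_{q+\beta},\omega_{p+\alpha}^{-1},\omega_{q+\beta}^{-1}$ interleave in the cyclic order, i.e.\ $(p+\alpha,q+\beta)$ is a linking pair. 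This produces $(s-1)(t-1)$ linking pairs. As a sanity check, for the model word $a^2b^2$ this recipe yields the single pair $(2,4)$, and a direct computation of the order confirms it is the unique crossing, so $i(a^2b^2)=1$; for $a^3b^2$ the recipe yields exactly the two pairs $(2,5)$ and $(3,5)$.

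I would then show these linking pairs are pairwise inequivalent. The equivalence relation only identifies $(i,j)$ with an $(i+1,j+1)$- or $(i,j+1)$-type neighbour when the two strands start with the same letter; since at each pair $(p+\alpha,q+\beta)$ one strand starts with $a$ and the other with $b$, no elementary move applies directly at these pairs, and one checks that any chain of moves leaving such a pair must pass through a run-boundary and cannot return to a different interior--interior pair. Hence the number of classes is at least $(s-1)(t-1)$, which is $\ge 2$ unless $s=t=2$. It then remains to treat the case $s=t=2$ with additional syllables present (so $\omega$ is strictly longer than $a^2b^2$): here I would exhibit, alongside the crossing coming from the $a^2b^2$ pattern, a second inequivalent linking pair created by an adjacent syllable, forcing $i(\gamma)\ge 2$ once more. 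Combining, $i(\gamma)=1$ forces $\omega=a^2b^2$.

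The main obstacle is the \emph{global} bookkeeping when the long $a$-run and long $b$-run are not adjacent and when the remaining syllables carry arbitrary signs: the interleaving computation and, above all, the inequivalence claim must be shown to survive in an arbitrary circular word rather than only in the model words $a^sb^t$. Controlling the cyclic order through the sign re-basing in these general configurations, and ruling out that intervening syllables merge the classes I have produced, is the delicate part; the clean model computations for $a^2b^2$ and $a^3b^2$ are meant to guide, but not replace, this general analysis.
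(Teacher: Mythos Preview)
Your approach via Cohen--Lustig linking pairs is genuinely different from the paper's. The paper observes that any reduced circular word containing both an $a^{\pm 2}$ and a $b^{\pm 2}$ block (and strictly longer than $a^2b^2$) is the image, under the ``cross-corner surgery'' of Chas--Phillips \cite{Chas-Phillips}, of a word which is visibly not simple by the Buser--Semmler classification (Theorem~\ref{thm:bus}); since that surgery raises self-intersection by at least one, the image has $i\ge 2$. This is a two-line reduction to existing tools and avoids any explicit order computation.

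Your $(s-1)(t-1)$ lower bound is cleaner than you fear, and your self-identified ``main obstacle'' is not one. After normalization, for $1\le\alpha\le s-1$ and $1\le\beta\le t-1$ the four words $\omega_{p+\alpha},\omega_{q+\beta},\omega_{p+\alpha}^{-1},\omega_{q+\beta}^{-1}$ begin with $a,b,a^{-1},b^{-1}$ respectively, so the linking condition holds by first-letter comparison alone---the rest of the circular word is irrelevant, no matter what signs the remaining syllables carry. Inequivalence is equally immediate: every elementary Cohen--Lustig move requires the two shifts involved to begin with the \emph{same} letter (with equal or opposite sign), whereas at each pair $(p+\alpha,q+\beta)$ and at all four neighbouring index pairs the first letters are $a$ and $b$. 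Hence each such pair is an isolated singleton in the equivalence graph; no global bookkeeping is needed.

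The actual gap is the residual case $s=t=2$ with extra syllables present. There your $(s-1)(t-1)$ bound gives only one class, and you assert you would ``exhibit a second inequivalent linking pair created by an adjacent syllable''---but this is precisely where the arbitrary signs of the neighbouring exponents and the re-basing rule genuinely bite, and you supply no mechanism for producing that second pair. The paper's surgery argument handles this case uniformly at no extra cost; if you insist on staying within the Cohen--Lustig framework, you owe an explicit construction here, and it is not clear that a single local pattern covers all sign configurations of the adjacent blocks.
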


\begin{proof}
Take a word that writes reduced as $\omega_1 a^{n_1} \omega_2 b^{n_2} \omega_3$ with $n_1=n_2=2$ and $\omega_1,\hdots,\omega_3$ being subwords, at least one of them being non-empty.
This word is the image by the ``cross-corner surgery'' described in \cite{Chas-Phillips} of a word $\omega_1^{-1}ab^{-1}\omega_2^{-1}a^{-1}b\omega_3$. By \cite[Proposition~2.2]{Chas-Phillips} this surgery increases self-intersection by at least one, and by Theorem \ref{thm:bus}, the initial word did not represent a simple curve, hence, self-intersection of the image is at least two. Same proceeding applies for $n_i=-2$ just by switching $a$ for $a^{-1}$, $b$ for $b^{-1}$ or both.
\end{proof}

Hence, from now on we can assume that one of the generators has only exponents $\{-1,1\}$. Following then,

\begin{proposition}\label{prop:si1case1}
Let $\omega$ be a word reprsenting a curve with a single self-intersection of the form $a^{n_1} b^{\epsilon_1} \cdots a^{n_k} b^{\epsilon_k}$ where $\epsilon_i \in \{ -1, 1 \}$ for all $i$ and $m_i m_j = -1$ for some $i,j$.
Then, $\omega$ is, up to renaming the generators and circular shift, of the form
    \begin{equation} \label{eq:3.2}
    a b^{-1} a^{-1} b \cdot a^{m_1} b \cdots a^{m_r} b
    \qquad \text{or} \qquad
    a b^{-1} a^{-1} b \cdot a^{-m_1} b \cdots a^{-m_r} b
    \end{equation}
    where $a^{m_1}b \cdots a^{m_r}b$
    represents a primtive simple curve,
    with the short exceptional cases $ab^{-1}ab,b^{-1}a^{-1}ba^2$.
    Conversely, every word representing a primitive simple curve of the form $a^{m_1} b \cdots a^{m_r} b$ can be circular shifted to a unique word $\sigma$ such that $a b^{-1} a^{-1} b \cdot \sigma$ represents a curve with a single self-intersection. 
\end{proposition}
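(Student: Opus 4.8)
The plan is to combine the Cohen--Lustig linking-pair count with the cross-corner surgery used in Proposition~\ref{prop:si1case2} and the Buser--Semmler classification (Theorem~\ref{thm:bus}). For the forward direction, the guiding observation is that, after renaming so that $b$ is the majority sign, a word of the stated form whose $b$-exponents change sign must, in order to keep the self-intersection number equal to one, contain exactly one letter $b^{-1}$, flanked by $a^{+1}$ and $a^{-1}$; that is, it must contain a single commutator block $a b^{-1} a^{-1} b$ and otherwise only positive powers of $b$. Once this is established, the complementary arc is a word $a^{m_1} b \cdots a^{m_r} b$ in which all $b$'s are positive, and I would argue it must represent a simple curve, yielding the normal form~\eqref{eq:3.2}.

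First I would localize the sign changes. Reading the cyclic word, each maximal block of $b^{-1}$'s is bounded by $b$'s, so the signs of the $b$-letters change an even number of times. I would show that two distinct sign-change regions, or a single $b^{-1}$ adjacent to an $a$-exponent of absolute value $\geq 2$, force at least two inequivalent linking pairs: in the first situation the two excursions each produce their own linking-pair class, while in the second, applying the cross-corner surgery of \cite{Chas-Phillips} as in Proposition~\ref{prop:si1case2} exhibits a sub-configuration whose self-intersection already exceeds one. Hence there is a single $b^{-1}$, and the neighbouring $a$-exponents are $\pm 1$, pinning the local picture to $\cdots b\, a\, b^{-1}\, a^{-1}\, b \cdots$, i.e.\ the block $a b^{-1} a^{-1} b$. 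The remaining arc then has all $b$'s positive; I would argue that its internal self-intersections add to the one already forced by the commutator block, so any internal self-intersection pushes the total to at least two. Therefore the arc has self-intersection zero, and Theorem~\ref{thm:bus} identifies it as $a^{m_1} b \cdots a^{m_r} b$ with $[m_1,\dots,m_r]$ of small variation; the two sign choices for the tail give the two families in~\eqref{eq:3.2}, and the short tails are recorded as the exceptional cases $ab^{-1}ab$ and $b^{-1}a^{-1}ba^2$.

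For the converse, start from a primitive simple word $W = a^{m_1} b \cdots a^{m_r} b$ and form $a b^{-1} a^{-1} b \cdot \sigma$ for each cyclic shift $\sigma$ of $W$. I would compute the linking-pair count of this circular word directly, or, preferably, reduce it using the self-intersection-preserving automorphisms $\alpha_m, \tilde{\alpha}_m$ of Lemma~\ref{lem:red}, which act on the positive-$b$ tail exactly as the necklace operation $A$ of Proposition~\ref{prop:EU} while preserving the peripheral block $a b^{-1} a^{-1} b$ up to renaming, and thereby shrink the $a$-exponents. Iterating this reduces $W$ to a base case of small length, where $a b^{-1} a^{-1} b \cdot \sigma$ can be checked by hand to have exactly one self-intersection for precisely one shift $\sigma$ (the shift distinguished by the run structure of the small-variation necklace), the degenerate base cases producing the exceptions $ab^{-1}ab$ and $b^{-1}a^{-1}ba^2$. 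Tracking the distinguished shift back through the reduction then yields the unique $\sigma$.

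The main obstacle I anticipate is the bookkeeping of linking pairs when the tail exponents $m_i$ are large: I must show both that a non-simple tail creates a genuinely new equivalence class of linking pairs, not one already counted by the commutator block, and that, among the cyclic shifts, exactly one avoids creating a second class. The cleanest way around this is to lean on the automorphisms $\alpha_m, \tilde{\alpha}_m$, which preserve the self-intersection number and the peripheral class, so as to replace the large-exponent analysis by an induction terminating in a handful of explicitly checkable base words, rather than counting linking pairs in full generality.
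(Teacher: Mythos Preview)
Your overall architecture --- isolate a single commutator block $ab^{-1}a^{-1}b$, show the complementary arc is simple, then handle the converse by the automorphisms $\alpha_m,\tilde\alpha_m$ --- is the same as the paper's. The converse direction in particular is essentially what the paper does: it reduces via $\alpha_m,\tilde\alpha_m$ (acting as $A$ on the exponent necklace and fixing the peripheral class), combined with an explicit linking-pair lemma (Lemma~\ref{lem:uniq}) pinning down which cyclic shift of the simple tail is the correct one.

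The gap is in your forward localization. You propose to handle ``$b^{-1}$ adjacent to an $a$-exponent of absolute value $\geq 2$'' by the cross-corner surgery of Proposition~\ref{prop:si1case2}, but that surgery is specific to the configuration $\cdots a^2\cdots b^2\cdots$ and does not apply to a sign change in $b$ flanked by a long $a$-run. The paper instead performs a different, tailored family of surgeries (Figures~3 and~4): on a subword $ab^{-1}a^{i}ba\omega$ or $ab^{-1}a^{-i}b\omega$ it exhibits an explicit move that lowers the self-intersection by exactly one and shortens the exponent $i$ by one, iterating until the result must be simple by Theorem~\ref{thm:bus}. This is also where the exceptional words $ab^{-1}ab$ and $b^{-1}a^{-1}ba^{2}$ actually appear: they are the sole survivors of the cases $ab^{-1}a^{i}ba\omega$ (with $\omega=\emptyset$, $i=1$) and $ab^{-1}a^{-i}b\omega$ (with $\omega=\emptyset$, $i=2$), not ``short tails'' of the commutator-plus-simple form. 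In particular your claim that the flanking $a$-exponents are forced to be $+1$ and $-1$ is false exactly for these two words, so they cannot simply be recorded at the end --- they must fall out of the case analysis itself.

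Your other localization claim, that two distinct $b$-sign-change regions each contribute a separate linking-pair class, is plausible but you give no mechanism. The paper avoids this by a different route: once one sign change is normalized to $ab^{-1}a^{-1}b$, it argues geometrically (Figure~5) that the complementary arc $\omega$ satisfies $i(\omega)\leq i(ab^{-1}a^{-1}b\,\omega)-1$, hence $\omega$ is simple; any second sign change in $\omega$ is then excluded by feeding it back into the already-established case analysis. If you want to bypass the surgeries and argue purely with linking pairs, you will need to produce the two inequivalent classes explicitly, which is more bookkeeping than your sketch suggests.
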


\begin{proof}
    We will characterize all possible changes of sign.
    Start by considering a reduced word of the form $a b^{-1} a^i b a \omega$, with $i\geq1$ and $\omega$ being a subword. By applying a surgery as in Figure \ref{fig:ieq2}, 
\begin{figure}[ht]
\centering
\begin{subfigure}[b]{0.45\textwidth}
\centering
\begin{overpic}[width=.75\textwidth]{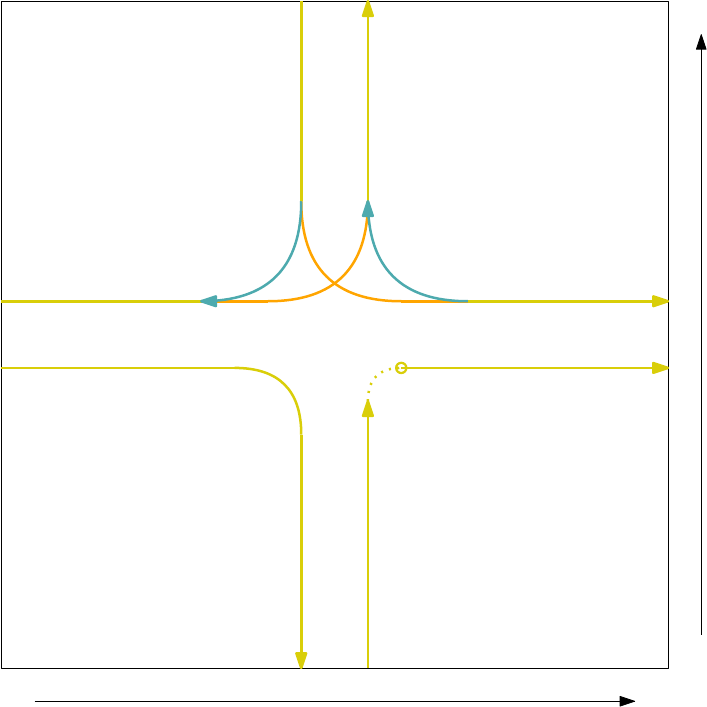}
\put(0,-1){$a$}
\put(98,3){$b$}
\end{overpic}
\caption{$i=1$}
\label{fig:ieq1}
\end{subfigure}
\begin{subfigure}[b]{0.45\textwidth}
\centering
\begin{overpic}[width=.75\textwidth]{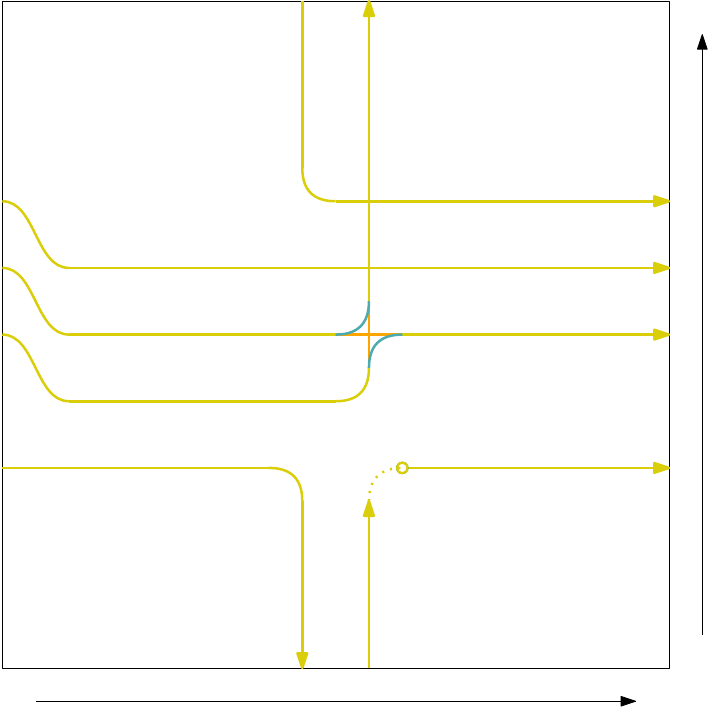}
\put(0,-1){$a$}
\put(98,3){$b$}
\end{overpic}
\caption{$i \geq 2$}
\label{fig:ieq2}
\end{subfigure}
\caption{Surgery on a word of the form $a b^{-1} a^i b a\omega$: remove the orange part and add the blue part.}
\end{figure}
if $i\geq2$, the word will lose one self-intersection and become $ab^{-1}a^{i-1}ba\omega$ (coming in the linking pair notation as losing the linking pair given by the cyclic shifts of the word: $\omega_1=ba\omega ab^{-1}a^i$ and  $\omega_2=aba\omega ab^{-1}a^{i-1}$). And, for $i=1$, the word also loses a self-intersection and becomes $a b^{-1} a^{-1} b \omega$ (in linking pairs notation loses the linking pair given by the two cyclic shifts of the word: $\omega_1=ab^{-1}a^{i}ba\omega$ and $\omega_2=aba\omega ab^{-1}a^{i-1}$). Now, by the classification of simple closed curves, this word is simple if and only if $\omega=\emptyset$, hence we find that the only candidate of the form  $ab^{-1}a^iba\omega$, with $i\geq1$, for self-intersection one is $ab^{-1}aba$, which indeed can be checked by the algorithm in \cite{Cohen-Lustig} to have a single self-intersection and will correspond to one of the exceptional short cases.

    For the rest of the cases, one can assume that when there is a change of sign in $b$, there is also a change of sign in $a$. In particular, since renaming all $a$ for $a^{-1}$ (and respectively with $b$) does not change the self-intersection number, we will consider words of the form $ab^{-1}a^{-i}b\omega$, with $i\geq2$, and $\omega$ being a subword.
     
\begin{figure}[ht]
    \begin{overpic}[width=0.35\linewidth,keepaspectratio]{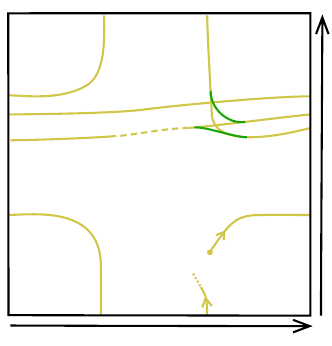}
    \put(46,0.5){$a$}
    \put(94,50){$b$}
    \end{overpic}
    \caption{Surgery on a word of the form $ab^{-1}a^{-i}b\omega$ with $i\geq2$}
    \label{fig:cut2}
\end{figure}
    Again, by applying a surgery as in Figure \ref{fig:cut2}, if $i\geq2$, the word will lose one self-intersection and become $ab^{-1}a^{-i+1}ba\omega$ (corresponding to losing the linking pair coming from the permutations of the word $\omega_1=b\omega ab^{-1}a^{-i}$ and $\omega_2=a^{-1}b\omega ab^{-1}a^{-i+1}$). Once again, by the classification of simple words, if $\omega\neq\emptyset$ this will never be simple and hence the initial word does not have a single self-intersection, whilst if $\omega=\emptyset$, this will be simple if and only if $i=2$, in which case the only possible candidate for a single self-intersection is the word $ab^{-1}a^{-2}b$, which can be checked by the algorithm in \cite{Cohen-Lustig} to be indeed of self-intersection one, giving the other short exceptional case.

    Hence, we can restrict to the case where the changes of sign come from subwords of the kind $ab^{-1}a^{-1}b$ (up to renaming $a$ for $a^{-1}$ or $b$ for $b^{-1}$). Hence, take a reduced word $ab^{-1}a^{-1}b\omega$ with $\omega$ a subword such that the exponents on $b$ are $\pm1$. Note that if $\omega$ starts with an $a$, up to homotopy, one finds the first situation in Figure \ref{fig:comu}, meaning that the surface is divided into two regions with the startpoint of the curve in one of them and the continuation in the other one. Thus, the curve will intersect at some point the arcs given by the commutator $ab^{-1}a^{-1}b$, and so $i(\omega)\leq i(ab^{-1}a^{-1}b\omega)-1$. Therefore, $\omega$ has to represent a simple word starting with $a$. Moreover, $\omega$ has to be of the form $a^{n_1}b\cdots a^{n_k}b$, as if it was of the form $a^{n_1}b^{-1}\cdots a^{n_k}b^{-1}$ the curve $ab^{-1}a^{-1}b\omega$ would contain the subword $ba^{n_1}b^{-1}a^{n_2}$ and the only such a word with self-intersection one is $bab^{-1}a$ as proved above, or if $\omega=ab^{-1}a^{-1}b$, the word would not be primitive. 

    \begin{figure}[ht]
\centering
\begin{subfigure}[b]{0.32\textwidth}
\centering
\begin{overpic}[width=.8\textwidth]{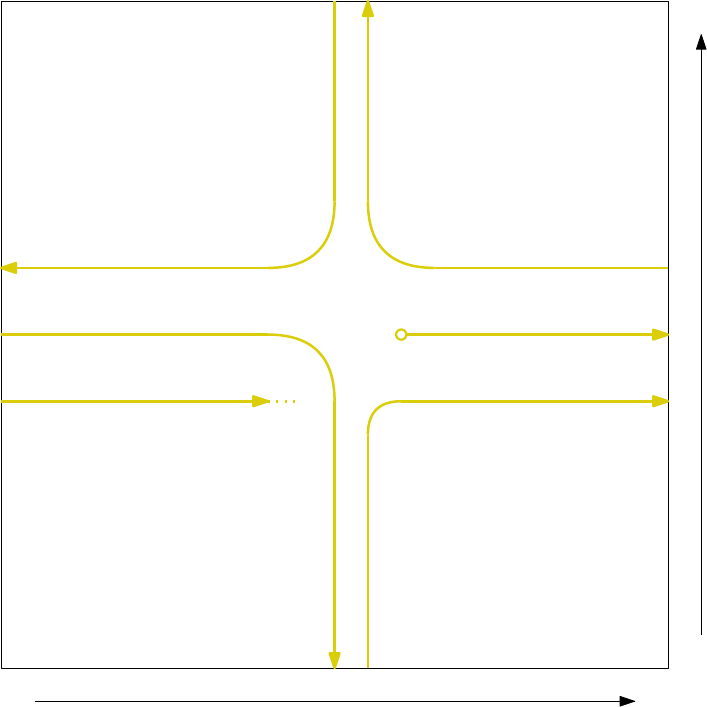}
\put(47,-7){$a$}
\put(102,50){$b$}
\end{overpic}
\label{fig:comu1}
\end{subfigure}
\begin{subfigure}[b]{0.32\textwidth}
\centering
\begin{overpic}[width=.8\textwidth]{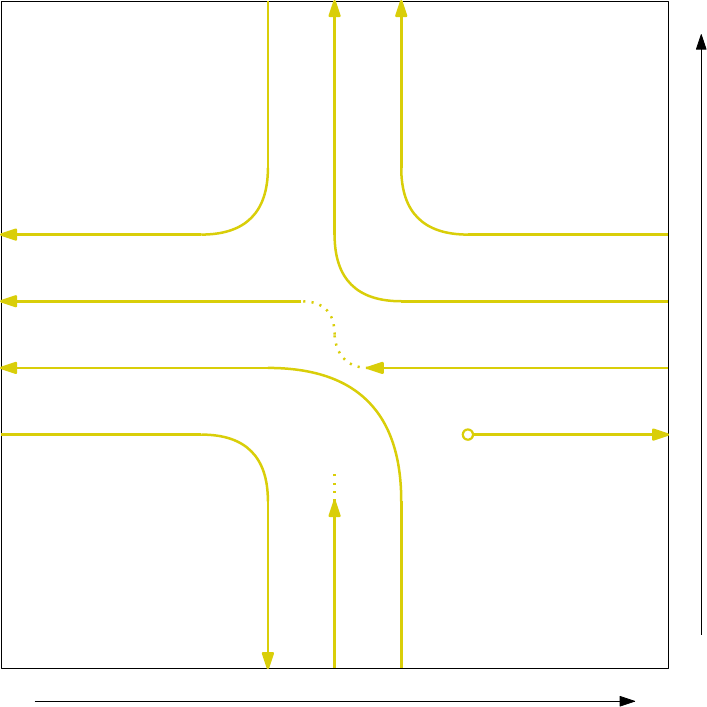}
\put(47,-7){$a$}
\put(102,50){$b$}
\end{overpic}
\label{fig:comu2}
\end{subfigure}
\begin{subfigure}[b]{0.32\textwidth}
\centering
\begin{overpic}[width=.8\textwidth]{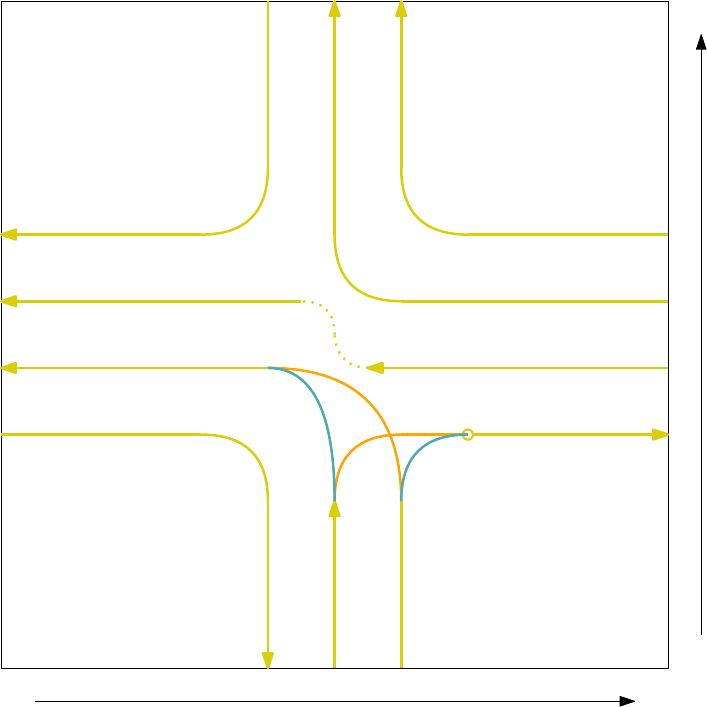}
\put(47,-7){$a$}
\put(102,50){$b$}
\end{overpic}
\label{fig:comu3}
\end{subfigure}
\caption{Possible words starting by $ab^{-1}a^{-1}b$}
\label{fig:comu}
\end{figure}
    
    On the other hand, if $\omega=a^{-i}b^{\epsilon}\omega'$ with $i\geq1$ and $\epsilon = \pm 1$, note first again that for $\epsilon=-1$ it is proved above that the only case containing a subword $a^{-1}ba^{-i}b^{-1}$ with a single self-intersection is $a^{-1}ba^{-1}b^{-1}$ and this is not the case. Therefore, $\epsilon=1$, leading to the second case in Figure \ref{fig:comu}. In this case, all $\omega$ has to be of the form $a^{-n_1}b\cdots a^{-n_{k}}b$, as any change of sign would lead again to the non-possible subword. Therefore, at the last step, one can perform a surgery as in the third part of Figure \ref{fig:comu}, transforming at one component $ab^{-1}ab\omega$ into $\omega$ and lowering the self-intersection by at least $1$, hence $\omega$ must represent a simple curve.

\begin{lemma}\label{lem:uniq}
    Let $ab^{-1}a^{-1}b\omega$ be a word with self-intersection one, being $\omega$ a simple word of the form $a^{n_1}b\cdots a^{n_k}b$. Then, it will always write as  
    \[
    ab^{-1}a^{-1}ba^{m+1}b(a^mb)^{t_1}\cdots a^{m+1}b(a^mb)^{t_s}
    \]
    with $t_1< t_s$ if $m>0$, and as 
    \[
    ab^{-1}a^{-1}b (a^{m+1}b)^{t_1}a^{m}b\cdots (a^{m+1}b)^{t_s}a^{m}b
    \]
    with $t_1>t_s$ if $m<-1$.
\end{lemma}
\begin{proof}
    Start by writing the word as $ab^{-1}a^{-1}b\omega$ with  $\omega=a^{n_1}b\hdots a^{n_k}b$ with $n_i$ being positive for all $i$. The word $ab^{-1}a^{-1}b\omega$ instantly gives a linking pair given the cyclic shifts $\omega_1=bab^{-1}a^{-1}ba^{n_1}b\cdots a^{n_k}$ and $\omega_2=ba^{n_1}b\cdots a^{n_k}bab^{-1}a^{-1}$, satisfying $\omega_2^{-1}<\omega_1<\omega_2<\omega_1^{-1}$.

    Assume now that $n_i\in\{m,m+1\}$ for some $m\geq 1$, as it represents a simple word, and $k>1$, as otherwise the result is trivial.

    Note first that if the curve has self-intersection one, then $n_1=m+1$ and $n_k=m$: if $n_1=m$, then exists $j\in\{1,\hdots,k\}$ such that $n_j=m+1$ giving another linking pair associated to the permutations $\omega_1=ba^{n_j}b\cdots a^{n_k}bab^{-1}a^{-1}ba^{n_1}b\cdots a^{n_{j-1}}$ and $\omega_2=ba^{n_1}b\cdots a^{n_k}bab^{-1}a^{-1}$, rising the total self-intersection to $2$. Similarly, if $n_k=m+1$, there is some $j\in\{1,\hdots,k\}$ such that $n_j=m$ and so we find a linking pair with the permutations $\omega_1=a^{n_k-1}bab^{-1}a^{-1}ba^{n_1}b\cdots ba$ and $\omega_2=a^{n_j}b\cdots a^{n_k}bab^{-1}a^{-1}ba^{n_1}b\cdots a^{n_{j-1}}b$.

    Write now the word as $ab^{-1}a^{-1}ba^{m+1}b(a^mb)^{t_1}\cdots a^{m+1}b(a^mb)^{t_s}$.

    Finally, $t_1<t_s$, as otherwise a new linking pair arises with the permutations $\omega_1=b(a^mb)^{t_s}ab^{-1}a^{-1}ba^{m+1}b(a^mb)^{t_1}\cdots a^{m+1}$ and $\omega_2=b(a^mb)^{t_1}\cdots a^{m+1}b(a^mb)^{t_s}ab^{-1}a^{-1}ba^{m+1}$. 

    The negative case can be proved by noting that after renaming generators and shifting cyclically, $ab^{-1}a^{-1}ba^{n_1}b\cdots a^{n_k}b$ can be rewritten as $ab^{-1}a^{-1}bab^{n_1}\cdots ab^{n_k}$ and there is an analog proof for the case with the exponents on $b$.
\end{proof} 

Note now that the maps $\alpha_m,\tilde{\alpha}_m$ defined in \eqref{eq:auto} fix the set of conjugacy classes $\{[ab^{-1}a^{-1}b],[(ab^{-1}a^{-1}b)^{-1}]\}$. Hence, considering a reduced circular word of the kind $ab^{-1}a^{-1}b\omega$, such that it has a single self-intersection and such that $\omega=a^{n_1}b\cdots a^{n_k}b$ with $n_i\in\{m,m+1\}$ for some $m\neq0,1$ represents a primitive simple word, we can apply the reduction $\alpha_m(ab^{-1}a^{-1}b\omega)=\omega'\alpha_m(\omega)$ if $|\{i\mid n_i=m\}|>|\{i\mid n_i=m+1\}|$ (or with $\tilde{\alpha}_m$ otherwise), where $\omega'$ is a cyclically reduced representative of the conjugacy classes of the commutators of $a,b$. As we are assuming that $\omega$ represents a simple word, applying finitely many times maps of the family $\alpha_m$, $\tilde{\alpha}_m$, this reduction acts as $A$ on the necklace $[n_1,\hdots, n_k]$ defined in Figure \ref{mingkun}, and it will reduce to a word of the kind $A^n(\omega)=a^{n'}b$ up to renaming the generators. Therefore, by inversing $A$ with the appropriate $B_m$'s, and given Lemma \ref{lem:uniq}, after renaming properly the generators such that the representative of the commutator is of the form $ab^{-1}a^{-1}b$, we find that there is a unique permutation of every simple word $\omega$ of the form $a^{n_1}b\cdots a^{n_k}b$ or $a^{-n_1}b\cdots a^{-n_k}b$ making $ab^{-1}a^{-1}b\omega$ a self-intersection one curve. 

Moreover, applying this reduction to Lemma \ref{lem:uniq}, one sees that, in order to obtain a self-intersection one curve from a simple one of the form $a^{n_1}b\cdots a^{n_k}b$ with $n_i\in\{m,m+1\}$ for some $m\neq 0,-1$, the commutator $ab^{-1}a^{-1}b$, has to be inserted before a block $a^{n_i}b$ such that $n_i=m+1$, $n_{i-1}=m$ and maximizing as much as possible the beginning of the chain $\{\sum_{j=1}^s |n_{i+\epsilon j}|\}_{s\geq1}$ with $\epsilon=1$ for positive $m$ and $-1$ for negative (e.g. see Figure~\ref{fig:whercomu}).
\begin{figure}[ht]
\centering
\begin{subfigure}[b]{0.45\textwidth}
\centering
\includegraphics[width=.75\textwidth]{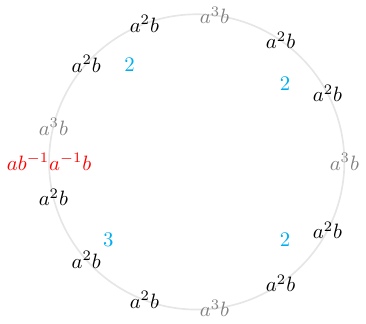}
\end{subfigure}
\begin{subfigure}[b]{0.45\textwidth}
\centering
\includegraphics[width=.75\textwidth]{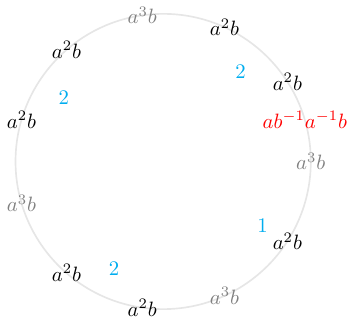}
\end{subfigure}
\caption{Words representing a curve with self-intersection one}
\label{fig:whercomu}
\end{figure}

Finally, up to generator renaming and circular shifting $ab^{-1}a^{-1}ba^{n_1}b\cdots a^{n_k}b$ generates the same curves as $ab^{-1}a^{-1}ab^{-n_1}\cdots ab^{-n_k}$, whilst  $ab^{-1}a^{-1}ab^{n_1}\cdots ab^{n_k}$ generates the same curves as $ab^{-1}a^{-1}ba^{-n_1}b\cdots a^{-n_k}b$, leaving us with the two cases in the statement of this proposition. This concludes the proof.
\end{proof}

Note also that if all the exponents of the letter $b$ are of the same sign, it has been proved already that the only possible word with self-intersection one and with a change of signs in the exponents of $a$ is, up to renaming of the generators, $a^{-1}bab$, which was already considered as an exceptional short case. Thus, now we can assume that there is no change of signs in the exponents and that the exponents of all $b$'s are $1$. Hence, up to renaming, we are left with words of the form 
\[
a^{n_1}ba^{n_2}b\cdots a^{n_k}b
\]
with all exponents being positive.
For the next proposition, we will need an extra definition, which is analogous to Definition~\ref{def:smv} for a single self-intersection.

\begin{definition}[2-variation] \label{def:2v}
Let $m \in \ZZ_{\geq 1}$ and $w = [n_i]_i$ be a necklace with $n_i \in \{m, m+1\}$ for all $i$.
We say that a pair of blocks of $w$ of the same size is an \emph{essential pair} if one block is $(m,  x_2, x_3, \dots, x_{k-1}, m)$ and the other block is $(m+1,  x_2, x_3, \dots, x_{k-1}, m+1)$ where $x_i \in \{ m, m+1\}$.
We say that $w$ has \emph{$2$-variation} if among all pairs of blocks of $w$, only one is essential.
\end{definition}

\begin{remark}\label{rmk:svform}
    Such a name is given after Buser--Semmler's small variation since we consider this to be the smallest case without small variation, hence variation $2$: an essential pair of blocks will always look like
    \[
    B_1= (m, x_2, x_3, \dots, x_{k-1}, m),
    \qquad
    B_2 = (m+1, x_2, x_3, \dots, x_{k-1}, m+1),
    \]
    with $x_i \in\{ m, m+1\}$.
    Therefore, 
    \[
    \bigg| \sum_{S_1} n_i - \sum_{S_2} n_j \bigg|
    =
    2,
    \]
    and whenever the small variation condition is broken, there will always exist at least one essential pair of blocks.
\end{remark}

\begin{proposition} \label{prop:si1case3}
The only circular words reprsenting a curve of a single self-intersection of the form $a^{n_1} b \cdots a^{n_k} b$ with all exponents being positive are
\begin{itemize}
    \item
    $a^{m}b \, a^{m+2}b$ where $m \in \ZZ_{\geq 1}$,

    \item
    $a^{n_1} b \cdots a^{n_k} b$ where the exponent necklace $[n_1, \dots, n_k]$ has $2$-variation.
\end{itemize}
\end{proposition}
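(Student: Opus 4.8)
The plan is to prove both implications at once by pinning down the self-intersection number through the Cohen--Lustig linking pairs and matching these with the essential pairs of Definition~\ref{def:2v}. Concretely, writing $\omega = a^{n_1}b\cdots a^{n_k}b$ with all $n_i\geq 1$ and $i(\omega)=1$, I would establish for balanced exponents the identity
\[
i(\omega) \;=\; \#\{\text{essential pairs of } [n_1,\dots,n_k]\},
\]
and treat the unbalanced exponents as a separate, short case. Since a necklace has $2$-variation exactly when it has precisely one essential pair, and since by Theorem~\ref{thm:bus} a balanced necklace with an essential pair is never simple (cf. Remark~\ref{rmk:svform}), this identity immediately gives the classification.

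First I would dispose of the words whose exponents are not confined to two consecutive values. Suppose $M-\mu\geq 2$, where $M=\max_i n_i$ and $\mu=\min_i n_i$. A direct application of the Cohen--Lustig algorithm (or a cross-corner surgery as in the proof of Proposition~\ref{prop:si1case2}) shows that a single gap of size $2$ between two exponents already produces one self-intersection, while a gap of size at least $3$, or a gap of size $2$ occurring in the presence of a third distinct block, produces at least two inequivalent linking classes. Hence the only surviving possibility is $k=2$ with $[n_1,n_2]=[m,m+2]$, which the algorithm confirms has $i=1$; this yields the first bullet.

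Next, assume $n_i\in\{m,m+1\}$ for all $i$. For the lower bound $i(\omega)\geq\#\text{essential pairs}$ I would argue directly from Cohen--Lustig: an essential pair $B_1=(m,x_2,\dots,x_{k-1},m)$ and $B_2=(m+1,x_2,\dots,x_{k-1},m+1)$ determines two cyclic shifts of $\omega$ sharing the long common stretch coded by $x_2\cdots x_{k-1}$ and differing only at the distinguishing endpoints, and one checks that this forces a linking quadruple $\omega_i<\omega_j<\omega_i^{-1}<\omega_j^{-1}$. Distinct essential pairs differ in their common stretches, so the resulting linking pairs lie in distinct classes under $(i,j)\sim(i+1,j+1)$ and $(i+1,j)\sim(i,j+1)$. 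In particular $i(\omega)=1$ forces at most one essential pair, and since $\omega$ is not simple there is exactly one, i.e.\ $[n_1,\dots,n_k]$ has $2$-variation.

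For the reverse inequality, and hence for the full identity, I would induct with the operator $A$. Applying $\alpha_m$ when $m$ is the minority value (and $\tilde\alpha_m$ otherwise) replaces each block $a^mb$ by $b$ and each $a^{m+1}b$ by $a$; by Lemma~\ref{lem:red} this preserves the self-intersection number, while on exponent necklaces it realises $A$, whose size strictly decreases (compare Lemma~\ref{lem:min}). The descent terminates at the word $a^2b^2$ (up to renaming), which the Cohen--Lustig algorithm shows has $i=1$, so by Lemma~\ref{lem:red} every necklace reducing to it also has $i=1$. \textbf{The main obstacle} is precisely the bookkeeping of essential pairs under $A$: I must show that erasing the minority letter and recording the run-lengths of the majority carries the unique essential pair bijectively to the unique essential feature of the reduced word, creating and destroying none, which amounts to controlling how the partial sums across a block transform under the substitution. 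A secondary difficulty is the enumeration of the terminal cases, where the reduced word leaves the normal form $a^{n_1}b\cdots a^{n_k}b$ by acquiring a square such as $b^2$ and must be recognised as $a^2b^2$; these base cases must be verified by hand to close the induction.
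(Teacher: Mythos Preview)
Your handling of the unbalanced exponents and your injection from essential pairs into linking-pair classes are exactly what the paper does. The divergence is in the converse direction. The paper does not invoke $A$ at this point; instead it inverts your injection directly. Given any linking pair $(i_1,i_2)$ with $\omega_{i_1}<\omega_{i_2}<\omega_{i_1}^{-1}<\omega_{i_2}^{-1}$ and all $n_i\in\{m,m+1\}$, one locates the first place where $\omega_{i_1}$ and $\omega_{i_2}$ disagree reading forward and the first place where $\omega_{i_1}^{-1}$ and $\omega_{i_2}^{-1}$ disagree; because exponents differ by at most one, each disagreement is forced to be an $m$ against an $m+1$, and the common stretch in between is precisely the middle $x_2,\dots,x_{k-1}$ of an essential pair of blocks. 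Equivalent linking pairs give the same essential pair, so this is a two-sided inverse of your injection and one obtains $i(\omega)=\#\{\text{essential pairs}\}$ outright. Your inductive route via $\alpha_m$ is not wrong---indeed the paper runs essentially that descent later, in the counting section, to show that a $2$-variation necklace of profile $(m,x,y)$ exists precisely when $\gcd(x,y)=2$---but here the direct bijection buys the full identity with none of the bookkeeping you flag as the main obstacle.

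There is also a concrete slip in your induction: the descent does \emph{not} terminate at $a^2b^2$ in general. The recursion bottoms out when $\min(|w|_m,|w|_{m+1})=2$ (since $\gcd=2$, this is exactly when the minimum divides the maximum). If the other count is $2q$ with $q\geq 2$, the unique $2$-variation necklace has its two minority letters isolated, separating majority runs of lengths $q-1$ and $q+1$, and $\alpha_m$ sends the word to $a^{q-1}b\,a^{q+1}b$---a member of the \emph{first} bullet of the proposition, not $a^2b^2$; only in the degenerate case $|w|_m=|w|_{m+1}=2$ (necklace $[m,m,m+1,m+1]$) does one actually land on $a^2b^2$. Once you enlarge your single base case to the whole family $a^{m'}b\,a^{m'+2}b$, whose self-intersection you have already determined in the unbalanced step, the induction closes.
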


\begin{proof}
    Let us start by assuming that there are at least two exponents of $a$ with a difference of $3$, i.e.\ there are $i,j,m\geq1$ such that $n_i=m$, $n_j\geq m+3$. Take then the cyclic shifts of the word $\omega_1=a^{n_{i}}b\cdots$, $\omega_2=a^{n_j-1}ba^{n_{j+1}}\cdots ba$, and $\omega_3=a^{n_j-2}ba^{n_{j+1}}\cdots ba^2$. By the lexicographic ordering, one has 
    \[
        \omega_k<\omega_1<\omega_k^{-1}<\omega_1^{-1}\quad\text{for}\quad k\in\{2,3\},
    \]
    which in \cite{Cohen-Lustig} gives $2$ linking pairs associated to the pairs $(\omega_2,\omega_1)$, and $(\omega_3,\omega_1)$, in different classes, and hence self-intersection of at least $2$. 

    Let us now move to the case where there are $i,j,m\geq1$ such that $n_i=m$, $n_j= m+2$. Note that, from the existence of such a pair, we can straightforwardly take two cyclic shifts of the word: $\omega_1=a^{n_{i}}b\cdots a^{n_{i-1}}b$ and $\omega_2=a^{n_j-1}ba^{n_{j+1}}b\cdots ba$, given by the same ordering as above $\omega_2<\omega_1<\omega_2^{-1}<\omega_1^{-1}$, hence one linking pair giving one self-intersection. Now, if there was a $1\leq k\neq i,j$ such that $n_k\neq m+1$, by the same procedure we would find an extra self-intersection, giving at least two. Thus, assume now the word is of the form 
    \[
    a^{n_1}b\cdots a^{n_k}b a^{m} ba^{m_1}b\cdots a^{m_{k'}}ba^{m+2}b,
    \]
    with $n_i,m_j=m+1$. If $k'\neq 0$, choosing the cyclic shifts $\omega_1=a^mba^{m_1}\cdots a^{n_k}b$ and $\omega_2=a^{m_{k'}-1}ba^{m+2}\cdots ba$, again one has $\omega_2<\omega_1<\omega_2^{-1}<\omega_1^{-1}$ giving another linking pair in a different class, and hence an extra self-intersection, i.e. at least $2$. If $k'=0$ and $k\neq0$, one can find analogously an extra intersection. Implying then that if there are two exponents with a difference of two, the only candidate with a single self-intersection (up to renaming and cyclic shift) is when $k=k'=0$, that is $ab^{m}ab^{m+2}$, and by \cite[Proposition~A.1]{Chas-Phillips}, indeed has a single self-intersection for every $m\in\ZZ_{\geq1}$.

    Let us now move to the case where we have a word of the form $a^{n_1}b\cdots a^{n_k}b$ with all $n_i\in\{m,m+1\}$ for some $m\in\ZZ_{\geq1}$. We want to prove that it has self-intersection one if and only if the necklace of positive integers $[n_1,\hdots, n_k]$ has $2$-variation.

    It is enough to see that in this case there is a $1:1$ correspondence between classes of linking pairs in \cite{Cohen-Lustig} algorithm and essential pairs of blocks in the necklace $[n_1,\hdots, n_k]$. We will start by constructing a class of linking pairs in the word given an essential pair of blocks in the associated necklace.

    Recall from Remark \ref{rmk:svform} such a pair of blocks is always going to be of the form 
    \[
    S_1=\{m,n_{i_1+2},\hdots,n_{i_1+s-1},m\},
    \qquad
    S_2=\{m+1,n_{i_2+2},\hdots,n_{i_2+s-1},m+1\},
    \]
with $n_{i_1+j}=n_{i_2+j} \in \{ m, m+1 \}$ for $j=2,\hdots, s-1$. Note that every such pair gives rise to a linking pair on the word $a^{n_1}b\cdots a^{n_k}b$ by taking $\omega_1=a^m ba^{n_{i_1+2}}b\cdots b$, $\omega_2=a^m ba^{n_{i_2+2}}b\cdots ba$ with $\omega_2<\omega_1<\omega_2^{-1}<\omega_1^{-1}$ with respect to the lexicographic order $a<b<a^{-1}<b^{-1}$. Note also that by this construction every pair of blocks gives rise to a different class of linking pairs, given by the difference in first and last number, and that two linking pairs $(i,j)\sim (i+1,j+1)$ are in the same class if and only if the $i$th and $j$th letters of the word representative start by the same letter, being the only equivalence between linking pairs possible in this case. Hence, we found an injection from the set of essential pairs of blocks into self-intersections of the curve represented by the word. 

The converse injection is given by the following: take a linking pair $(i_1,i_2)$, that is we have two permutations of the initial word giving 
\begin{equation}\label{si:cd}
    \omega_{i_1}<\omega_{i_2}<\omega_{i_1}^{-1}<\omega_{i_2}^{-1}.
\end{equation}
Assume that both start with $a$, then they are of the form 
$\omega_{i_t}=a^{l_t}ba^{n_{j_t+1}}b\cdots$ and $\omega_{i_t}^{-1}=a^{-(n_{j_t}-l_t)}b^{-1}a^{-n_{j_t-1}}b^{-1}\cdots$ for $t=1,2$.
Now, first inequality of Equation \eqref{si:cd} implies that either $l_1\geq l_2+1$, or $l_1=l_2$ and $n_{j_1+r}=n_{j_2+r}$ for $r=1,\hdots,s-1$ for some $s$ and $n_{j_1+s}=n_{j_2+s}+1$. Similarly, the last inequality of the equation gives for the other side $n_{j_1}-l_1\geq n_{j_2}-l_2+1$ or $n_{j_1}-l_1= n_{j_2}-l_2$ and $n_{j_1-r}=n_{j_2-r}$ for $r=1,\hdots,s'-1$ for some $s'$ and $n_{j_1-s'}=n_{j_2-s'}+1$. Note that since all $n_i\in\{m,m+1\}$, at most $l_1=l_2+1$ and so in all cases one finds at the extremes two subwords of the form $ba^{m+1}ba^{m_1}b\cdots ba^{m_{s}}ba^{m+1}b$, $ba^{m}ba^{m_1}b\cdots ba^{m_{s}}ba^{m}b$, that give an essential pair of blocks on the necklace of positive integers. 

Moreover, when both words start with a $b$, then the same argument for $l_1=l_2=0$ applies. Finally, assume they start with different letters. By Equation \eqref{si:cd}, $\omega_{i_1}$ starts with $a$ and $\omega_{i_2}$ with $b$, i.e. $\omega_{i_1}=a^lba^{n_{j_1+1}}b\cdots$ for some $0<l<n_{j_1}$ and $\omega_{i_2}=ba^{n_{j_2}+1}b\cdots$. In this case, $\omega_{i_1}^{-1}=a^{-(n_{j_1}-l)}b^{-1}a^{n_{j_1-1}}b\cdots$ and $\omega_{i_2}^{-1}=a^{-n_{j_2}}b^{-1}a^{n_{j_1-1}}b\cdots$.The first two inequalities of Equation \eqref{si:cd} are automatically true. The third inequality implies $n_{j_1}-l\geq n_{j_2}$, and since they can only differ by one it can only happen if $l=1$, $n_{j_1}=m+1$, and $n_{j_2}=m$. Then, all $n_{j_1-r}=n_{j_2-r}$ for $r=1,\hdots, s-1$ until some $n_{j_1-s}=m+1$ and $n_{j_2-s}=m$, for which we get again an essential pair of blocks in the necklace of positive integers. Moreover, note that by the construction above all the linking pairs giving the same essential pair of blocks are in the same class, as again the only possible equivalence is $(i,j)\sim (i+1,j+1)$ are in the same class if and only if the $i$th and $j$th letters of the word representative start by the same letter. 
\end{proof}

\begin{remark}
    The proofs of the above propositions have inside all the steps to prove that Theorem \ref{thm:bus} follows from \cite{Cohen-Lustig} algorithm. 
\end{remark}

\section{Counting}
This section is dedicated to counting primitive curves with self-intersection one, i.e. the following proof.

\begin{proof}[Proof of Theorem~\ref{theorem:si1}]
The exceptional cases are for length $4$ and $5$. For length $4$ there are two cases: $a^2b^2$ and $aba^{-1}b$, which after renaming of generators give us $8$ conjugacy classes, i.e.\ $|\{\gamma \in\mathcal{PC}_1(\Sigma_{1,1}\mid i(\gamma) = 1,\, \ell_\omega(\gamma)=4\}|=8$, and for length $5$ we have only the words of type $ab^{-1}a^{-1}b^2$ which give $|\{\gamma \in \mathcal{PC}_1(\Sigma_{1,1})\mid i(\gamma) = 1, \, \ell_\omega(\gamma)=5\}|=8$. 

For the general case of length $L\geq 6$, name $P_1(L)$ the number of words arising from Proposition \ref{prop:si1case1}, $P_2(L)$ the number of words arising from Proposition \ref{prop:si1case3} of the form $ab^{m}ab^{m+2}$, and $P_3(L)$ the number of the rest words arising from Proposition \ref{prop:si1case3}. Then, for $L\geq 6$, we will have 
\[
|\{\gamma \in\mathcal{PC}_1(\Sigma_{1,1})\mid i(\gamma) = 1, \, \ell_\omega(\gamma)=L\}|=8\cdot (P_1(L)+P_2(L)+P_3(L)),
\]
since each of these words will give different conjugacy classes after all the possible renamings of generators $\{a,b,a^{-1},b^{-1}\}$.

It is straightforward from the proposition that $P_1(L)=2\cdot|\{$aperiodic necklaces of positive integers $[n_1,\hdots,n_k]$ with small variation such that $k+\sum_{i=1}^kn_i=L-4\}|$. Hence, from the proof of Theorem \ref{mainth1}, that is 
\[
    P_1(L)=2\cdot\sum_{d \mid (L-4)}\mu(d)\left\lfloor \frac{L-4}{2d}\right\rfloor=\varphi(L-4),
\]
where again $\mu$ and $\varphi$ denote the Möbius function and Euler's totient function, respectively.

Now, $P_2(L)=1$ for even $L$ and vanishes otherwise. Lastly, to count $P_3(L)$, we need to count the number of $2$-variation necklaces of positive integers that give a word of length $L$.

\begin{proposition}
    Let $m, x, y \in \ZZ_{\geq 1}$.
    If $\gcd(x, y) = 2$, then there exists a unique necklace of integers with $2$-variation that contains exactly $x$ occurrences of the number $m$, and $y$ occurrences of the number $m+1$.
    Otherwise, no such necklaces exist.
\end{proposition}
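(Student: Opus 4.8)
The plan is to mirror the proof of Proposition~\ref{prop:EU}: induct on $\min(x,y)$ using the reduction $A$, now realized on curves by the automorphisms $\alpha_m,\tilde\alpha_m$ of \eqref{eq:auto}. The conceptual bridge is the bijection established inside the proof of Proposition~\ref{prop:si1case3} between classes of linking pairs of the curve $a^{n_1}b\cdots a^{n_k}b$ and essential pairs of blocks of its exponent necklace $[n_1,\dots,n_k]$: for a necklace with $n_i\in\{m,m+1\}$, having $2$-variation in the sense of Definition~\ref{def:2v} is \emph{equivalent} to the curve $a^{n_1}b\cdots a^{n_k}b$ having self-intersection one. Since $\alpha_m,\tilde\alpha_m$ preserve the self-intersection number (Lemma~\ref{lem:red}) and act on the exponent necklace exactly as $A$, the operation $A$ should carry $2$-variation necklaces to $2$-variation necklaces. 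Moreover, a direct computation from the formulas \eqref{eq:Gmxy} of Lemma~\ref{lem:min} gives $\gcd(x',y')=\gcd(x,y)$, so $A$ runs the Euclidean algorithm on $(x,y)$ while keeping the greatest common divisor fixed; this invariant is what will ultimately pin the condition to $\gcd(x,y)=2$.

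First I would settle the two base cases. If $\min(x,y)=1$, say $x=1$, the necklace has a single $m$, so no block can both begin and end with $m$; hence there is no essential pair at all and no $2$-variation necklace exists, consistent with $\gcd(1,y)=1\neq 2$. If $\min(x,y)=2$, say $x=2$, the two copies of $m$ cut the circle into two runs of $m+1$ of lengths $p,q$ with $p+q=y$, and up to rotation the necklace is determined by the unordered pair $\{p,q\}$. Since a block of the form $(m,M,m)$ forces $M$ to be one of the two full arcs, a short count shows that the number of essential pairs equals $\max(0,|p-q|-1)$; this equals $1$ exactly when $|p-q|=2$. A pair $\{p,q\}$ with $p+q=y$ and $|p-q|=2$ exists, and is then unique, precisely when $y$ is even, that is when $\gcd(2,y)=2$. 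This disposes of $\min(x,y)\leq 2$ and identifies the minimal $2$-variation necklace as the one whose two majority runs differ by exactly $2$.

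For the inductive step with $\min(x,y)\geq 3$, I would show that $A$ restricts to a bijection from the set of $2$-variation necklaces with profile $(m,x,y)$ onto the set of $2$-variation necklaces with the reduced profile $(m',x',y')$ of \eqref{eq:Gmxy}, whose $\min$ is strictly smaller by Lemma~\ref{lem:min}. Injectivity is immediate since $\alpha_m,\tilde\alpha_m$ are automorphisms; surjectivity follows by applying the inverse operations $B_m,\tilde B_m$ and invoking Lemma~\ref{lem:red} once more to see that the preimage again represents a self-intersection-one curve, hence a $2$-variation necklace. Combined with the induction hypothesis and the invariance $\gcd(x',y')=\gcd(x,y)$, this yields existence and uniqueness whenever $\gcd(x,y)=2$. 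The non-existence for $\gcd(x,y)\neq 2$ then comes for free and uniformly: any $2$-variation necklace reduces, through the chain above, to a base necklace with $\min=2$, which by the previous paragraph has even majority count and hence profile gcd equal to $2$; since $\gcd$ is preserved at every step, the original profile also satisfies $\gcd(x,y)=2$. In particular no $2$-variation necklace can have $\gcd(x,y)=1$ or $\gcd(x,y)\geq 3$.

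The step I expect to be the main obstacle is proving that $A$ genuinely preserves the property of having exactly one essential pair, together with controlling the boundary behaviour of the reduction. In a small-variation necklace the minority symbol is never adjacent to itself (Remark~\ref{rem:byDef}), and I would need the analogous control here: if the minority symbols of a $2$-variation necklace were adjacent away from the single defect, $A$ would create a run of length zero and the reduced word would leave the clean form $a^{n_1}b\cdots a^{n_k}b$, collapsing to $a^2b^2$. One must therefore verify that such a collapse happens only at the minimal necklace $[m,m,(m+1)^2]$ of profile $(m,2,2)$ and otherwise the reduction stays within genuine positive-exponent necklaces until reaching either $a^2b^2$ or the type-(4) base $a^{m'}ba^{m'+2}b$ of Theorem~\ref{thm:si1}. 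Matching these two degenerate endpoints with the $\min=2$ base cases, and confirming that the essential-pair/self-intersection correspondence of Proposition~\ref{prop:si1case3} persists along the whole reduction, is where the careful bookkeeping lies.
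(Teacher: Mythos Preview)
Your argument is correct, and the overall architecture---reduce via $A$, keep $\gcd(x,y)$ invariant, terminate at a hand-checked base case---is the same as the paper's. The genuine difference is in how you certify that $A$ preserves $2$-variation. The paper stays purely combinatorial: it first disposes of the divisibility case $\min(x,y)\mid\max(x,y)$ directly (showing a $2$-variation necklace exists there iff $\min(x,y)=2$), then for $\min\nmid\max$ argues that runs must lie in $\{\lfloor\max/\min\rfloor,\lceil\max/\min\rceil\}$ and exhibits an explicit bijection between essential pairs of $w$ and essential pairs of $A(w)$. You instead route through topology: invoke the equivalence ``$2$-variation $\Leftrightarrow$ $i=1$'' from Proposition~\ref{prop:si1case3} and then Lemma~\ref{lem:red} to transport self-intersection along $\alpha_m,\tilde\alpha_m$. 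This buys you the preservation statement for free, at the cost of the bookkeeping you flag in your last paragraph: you must check that for $\min(x,y)\geq 3$ the image $\alpha_m(\omega)$ lands back in the clean form $a^{n'_1}b\cdots a^{n'_{k'}}b$ with $n'_i\in\{m',m'+1\}$, i.e.\ that minority letters are never adjacent and that run-lengths span at most two consecutive values. Both follow by contraposition from Propositions~\ref{prop:si1case2} and~\ref{prop:si1case3} (adjacent minorities would force the image to be $a^2b^2$, hence $x+y=4$; a run-length gap of $2$ would force the image to be $a^{m'}ba^{m'+2}b$, hence $\min=2$), so the obstacle you identify is real but easily cleared. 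One small correction to your base-case sketch: when $\min(x,y)=2$ you should allow one of the two arcs to have length $0$ (the two $m$'s adjacent), which your formula $\max(0,|p-q|-1)$ still handles correctly and which recovers the profile $(m,2,2)$ case $[m,m,m+1,m+1]$.
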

\begin{proof}
Without loss of generality, we assume throughout the proof that $x\leq y$.
Let $k \in \ZZ_{\geq 1}$, and let $[n_i]_i = [n_1, \dots, n_k]$ be a necklace such that $n_i \in \{ m, m+1 \}$ for all $1 \leq i \leq k$, $|\{ i \mid n_i = m \}| = x$, and $|\{ i \mid n_i = m + 1 \}| = y$.

First, we prove that if $x \mid y$ then the necklace $[n_i]_i$ does not have variation $2$.
If every run of $m+1$ has size $y/x$, then $[n_i]_i$ has small variation.
If there exist two runs of $m+1$ with sizes differing by at least $3$ (for example, $y/x - 1$ and $y/x + 2$),
then $[n_i]_i$ does not have variation $2$.
Thus, if $[n_i]_i$ has variation $2$, then the sizes of its runs of $m+1$ can only take values in $\{ y/x-1, y/x, y/x+1 \}$, as the sum of the runs have to sum $y$ and there are $x$ of them.

Further, if there is one run of size $y/x+1$, then there is at least one run of size $y/x-1$ and there can only be one as otherwise, these give immediately two essential pairs of blocks. Therefore, for a necklace with $2$-variation, there is one run of size $y/x-1$, one of size $y/x+1$, and $x-2$ runs of size $y/x$. 

If $x>2$, then the necklace cannot have $2$-variation, as one essential pair of blocks is given straight by the runs with difference $2$, i.e.\ $\{m,m+1,\overset{y/x-1}{\dots},m+1,m\}$ and $\{m+1,\overset{y/x+1}{\dots},m+1\}$, and another one by extending these sets to an adjacent gap of size $y/x$ (e.g.\ see Figure \ref{fig:necxgeq2}). When $x=2$, there are only two runs, of sizes $y/x-1$ and $y/x+1$, and one unique possible configuration with these, giving always variation $2$ (see Figure~\ref{fig:necx2}).

\begin{figure}[ht]
\centering
\begin{subfigure}[b]{0.45\textwidth}
\centering
\includegraphics[width=.75\textwidth]{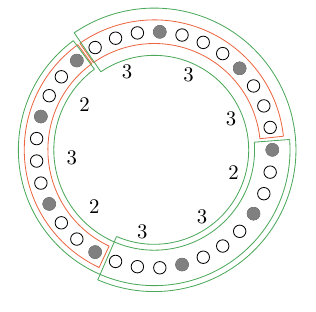}
\caption{$x=3, y=6$,}
\label{fig:necxgeq2}
\end{subfigure}
\begin{subfigure}[b]{0.45\textwidth}
\centering
\includegraphics[width=.7\textwidth]{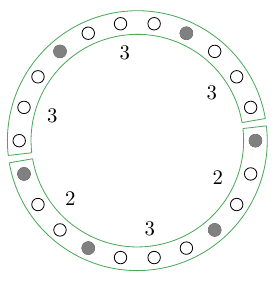}
\caption{$x=2, y=4$.}
\label{fig:necx2}
\label{fig:xdivy}
\end{subfigure}
\caption{Two examples of cases where $x \mid y$}
\end{figure}

Assume from now on $x \nmid y$. First note that for a necklace with variation $2$ the sizes of its runs have to take values in $\{\floor{y/x},\ceil{y/x}\}$: the existence of a run of size at least $\ceil{y/x}+1$ implies that there should be at most one run of size $\floor{y/x}$, as every couple of runs with difference two gives rise to an essential pair of blocks, and the rest be of size $\ceil{y/x}$. However, there are $x$ runs and so 
$y=(\ceil{y/x}+1)+\floor{y/x}+(x-2)\ceil{y/x}=x\ceil{y/x}$, contradicting $x\nmid y$. The case for the existence of a run of size at most $\floor{y/x}-1$ is symmetric.

Moreover, the necklace $[n_i]_i$ has $2$-variation if and only if the associated run necklace $A[n_i]_i$ described in Figure~\ref{mingkun} has. This comes naturally from the map $A$ as two blocks of the form $ S_1=\{m, n_{j+2},\hdots, n_{j+s-1},m\}$ and $S_2=\{m+1, n_{j+2},\hdots, n_{j+s-1},m+1\}$ will map to the same sequence of runs with the first and last one being bigger by one at the second case. Conversely, if there is such a sequence of runs, finding the associated pair of blocks in $[n_i]_i$ is straightforward. 

Finally, note that by the same computations in Lemma \ref{lem:min}, a necklace of profile $(m,x,y)$ maps to a necklace of profile $(\floor{y/x},x-y+x\floor{y/x},y-x\floor{y/x})$, and by elementary properties $\gcd(x,y)=\gcd(x-y+x\floor{y/x},y-x\floor{y/x})$. Therefore, as the number of appearances in the profile keeps decreasing while they do not divide each other, this will only stop when they do, and as the $\gcd$ is maintained, that will happen when the minimum reaches $\gcd(x,y)$, and as the dividing case has already been proved, there will exist a $2$-variation necklace if and only if $\gcd(x,y)=2$.
\end{proof}

Hence, since a word $ab^{n_1}\cdots ab^{n_k}$ has length $k+\sum_{i=1}^kn_i$, $P_3(L)$ will be exactly the number of solutions to the following equation. 

\begin{proposition} \label{prop:='}
    Consider the equation
    \begin{equation} \label{eq:K'}
        x(m+1) + y(m+2) = L
    \end{equation}
    where $L \in \ZZ_{\geq 1}$ is given, and $x,y,m \in \ZZ_{\geq 1}$ are unknown such that $\gcd(x,y) = 2$.
    Then for even $L$, there are exactly $\ceil{\varphi(L/2)/2}-1$ solutions,
    and none for odd $L$. 
\end{proposition}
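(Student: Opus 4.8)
The plan is to reduce this to a classical count of integers coprime to a fixed modulus that lie in a half-interval. First I would exploit the hypothesis $\gcd(x,y)=2$: it forces both $x$ and $y$ to be even, so I write $x=2x'$ and $y=2y'$ with $\gcd(x',y')=1$ and $x',y'\in\ZZ_{\geq1}$. Substituting into \eqref{eq:K'} and dividing by $2$ turns the equation into
\[
x'(m+1)+y'(m+2)=L/2 .
\]
Since the left-hand side is an integer, this already shows there is no solution when $L$ is odd. For the remainder I set $K\coloneqq L/2$ and count triples $(x',y',m)\in\ZZ_{\geq1}^3$ with $\gcd(x',y')=1$ solving $x'(m+1)+y'(m+2)=K$.

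The key manoeuvre is the substitution $s\coloneqq x'+y'$, under which the equation collapses to $s(m+1)+y'=K$, because $y'(m+2)=y'(m+1)+y'$. For a fixed value of $s$ this is just Euclidean division: writing $t=m+1$, the constraints $x'=s-y'\geq1$ and $y'\geq1$ amount to $1\leq y'\leq s-1$, so $(t,y')$ is forced to equal $(\lfloor K/s\rfloor,\,K-s\lfloor K/s\rfloor)$, and it exists precisely when $s\nmid K$. The condition $m\geq1$ reads $t\geq2$, i.e.\ $s\leq K/2$, and $s=x'+y'\geq2$. The coprimality condition transforms cleanly: since $x'=s-y'$ and $y'\equiv K\pmod s$,
\[
\gcd(x',y')=\gcd(s-y',y')=\gcd(s,y')=\gcd(s,K).
\]
Hence $\gcd(x',y')=1$ is equivalent to $\gcd(s,K)=1$, which in particular guarantees $s\nmid K$ for $s\geq2$. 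This sets up a bijection between the solutions we want and the integers $s$ with $2\leq s\leq\lfloor K/2\rfloor$ and $\gcd(s,K)=1$. The step needing the most care is verifying that these boundary conditions match exactly, so that every admissible $s$ yields one and only one valid triple, and conversely.

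It then remains to count $N(K)\coloneqq|\{\,s:2\leq s\leq\lfloor K/2\rfloor,\ \gcd(s,K)=1\,\}|$. For $K\geq3$ the involution $s\mapsto K-s$ acts without fixed points on the $\varphi(K)$ residues of $[1,K-1]$ coprime to $K$ (a fixed point would force $K=2$), and it sends the lower half-interval onto the upper one; therefore exactly $\varphi(K)/2$ of these residues lie in $[1,\lfloor K/2\rfloor]$. Discarding $s=1$ (always coprime and always in range) gives $N(K)=\varphi(K)/2-1$, and since $\varphi(K)$ is even for $K\geq3$ this equals $\lceil\varphi(K)/2\rceil-1$. The cases $K\in\{1,2\}$ (that is, $L\in\{2,4\}$) are handled by inspection, both giving $0=\lceil\varphi(K)/2\rceil-1$. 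Substituting $K=L/2$ yields the claimed count $\lceil\varphi(L/2)/2\rceil-1$ for even $L$. I expect the only real obstacle to be bookkeeping: pinning down the exact endpoints $s=2$ and $s=\lfloor K/2\rfloor$ and tracking the parity of $\varphi(K)$ so that the ceiling in the statement is correctly accounted for; the arithmetic and the symmetry argument are otherwise routine, and the overall skeleton parallels the proof of Proposition~\ref{prop:=}.
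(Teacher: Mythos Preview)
Your proof is correct and takes a genuinely different, more direct route than the paper's. The paper first reduces $|S_2(L)|$ to $|S_1(L/2)|$ exactly as you do, but then computes $|S_1(K)|$ indirectly: it invokes Proposition~\ref{prop:=} to obtain the convolution identity $\sum_{d\mid K}|S_1(K/d)|=\lfloor K/2\rfloor-\sigma_0(K)+1$, and then verifies that the candidate function $K\mapsto\lceil\varphi(K)/2\rceil-1$ satisfies this identity together with the prime values. Your substitution $s=x'+y'$ linearises the Diophantine equation in one stroke, turning the count directly into $|\{2\leq s\leq\lfloor K/2\rfloor:\gcd(s,K)=1\}|$, which you then evaluate via the reflection $s\mapsto K-s$. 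This bypasses both the appeal to Proposition~\ref{prop:=} and the verification-by-recursion step, and is entirely self-contained. The paper's approach, by contrast, highlights the structural relationship among the counts $S_d(L)$ for varying $\gcd$ and reuses machinery already in place, at the cost of a less transparent endgame.
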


\begin{proof}
Let us start by introducing some notation.
Define
\begin{align*}
    S(L)
    & \coloneqq
    \{ (x, y, m) \in \ZZ_{\geq 1}^3 \mid x(m+1) + y(m+2) = L \}, \\
    S_{>1}(L)
    & \coloneqq
    \{ (x, y, m) \in S(L) \mid \gcd(x, y) > 1 \}, \\
    S_{1}(L)
    & \coloneqq
    \{ (x, y, m) \in S(L) \mid \gcd(x, y) = 1 \}, \\
    S_{2}(L)
    & \coloneqq
    \{ (x, y, m) \in S(L) \mid \gcd(x, y) = 2 \}.
\end{align*}
Our objective is to determine $|S_2(L)|$.
The set of solutions $S_2(L)$ is empty if $L$ is odd.
When $L$ is even, the mapping $(x,y) \mapsto (x/2, y/2)$ defines a bijection from $S_2(L)$ to $S_1(L)$, and hence, we have $|S_2(L)| = |S_1(L/2)|$ for $L$ even.
By Proposition~\ref{prop:=}, we have
\[
    |S_{>1}(L)| + |S_1(L)|
    =
    \floor{L/2} - \sigma_0(L) + 1,
\]
where $\sigma_0(L)$ denotes the number of divisors of $L$.
(Here, we exclude solutions where $y=0$.)
Since any solution $(x, y, m) \in S_{>1}(L)$ corresponds to a solution in $S_1(L/\gcd(x,y))$,
we have
\[
    |S_{>1}|
    =
    \sum_{d \mid L, d \neq 1} |S_1(L/d)|.
\]
Therefore, we have
\begin{equation} \label{eq:rec}
    \sum_{d \mid L} |S_1(L/d)|
    =
    \floor{L/2} - \sigma_0(L) + 1,
\end{equation}
and in particular, we have
\begin{equation} \label{eq:p}
    |S_1(p)|
    =
    \floor{p/2},
    \qquad
    \text{for any prime $p$.}
\end{equation}
Now, note that since $S_1(0) = 1$, $|S_1(L)|$ can be uniquely determined by \eqref{eq:rec} and \eqref{eq:p} for any $L \in \ZZ_{\geq 1}$.
On the other hand, a direct computation shows that the function defined by $L \mapsto \ceil{\varphi(L)/2} - 1$ maps $0$ to $1$, and satisfies \eqref{eq:rec} and \eqref{eq:p}.
Hence, for any $L \in \ZZ_{\geq 1}$, we have
\[
    |S_1(L)|
    =
    \ceil{\varphi(L)/2} - 1
\]
and therefore, for any $L$ even, we have
\[
    |S_2(L)|
    =
    |S_1(L/2)|
    =
    \ceil{\varphi(L/2)/2} - 1.
\]
The proposition follows.
\end{proof}

Concluding, Proposition \ref{prop:='} implies that $P_3(L)=\ceil{\varphi(L/2)/2}-1$ for even $L$, and zero otherwise, hence one gets for $L\geq4$,
\begin{align*}
|\{\gamma \in\pi_1(S)\mid \ell_\omega(\gamma)=L, i(\gamma)=1\}|&=8\cdot(P_1(L)+P_2(L)+P_3(L))\\
&=
\begin{cases}
8 \, \varphi(L-4) & \text{for odd }L, \\
8 \left(\varphi(L-4) + 1 + \left\lceil \varphi(L/2)/2 \right\rceil - 1 \right)& \text{for even }L.
\end{cases}
\end{align*}
by summing the general cases and checking that it coincides for $L=4,5$ with the convention $\varphi(0)=0$.
\end{proof}

\part{All curves} \label{section:all}

In this section, we shall count all closed curves on $\Sigma_{1,1}$ of given word length, regardless of their self-intersection numbers.

This result (Theorem~\ref{mainth3}) is expected to be elementary.
However, despite our search in the literature, we couldn't find a reference.
Hence, we provide a complete proof here.

\begin{proof}[Proof of Theorem~\ref{mainth3}]
Let $n \in \ZZ_{\geq 1}$.
Let us denote by $w_n$ the number of reduced words in $\{ a, b, a^{-1}, b^{-1} \}$ of length $n$.
Define generating functions
\[
    W(t)
    \coloneqq
    \sum_{n=1}^{\infty}
    w_n t^n.
\]
Every word $\omega$ under consideration can be written in the form
\[
    x_1^{n_1} x_2^{n_2} \cdots x_k^{n_k}
\]
where for all $i$, $x_i \in \{ a, a^{-1}, b, b^{-1} \}$, $n_i \in \ZZ_{\geq 1}$, and $x_{i+1} \notin \{ x_{i}, x_{i}^{-1} \}$.
We discuss based on the parity of $k$.
If $k$ is even, then the last letter $x_k$ must be $b$ or $b^{-1}$.
Thus
\[
    \sum_{n=1}^{\infty}
    w_{2n} t^{2n}
    =
    4 \sum_{i=1}^{\infty} \frac{t}{1-t} \left( \frac{2t}{1-t} \right)^{2i-1}
    =
    \frac{8 t^2}{-3t^2-2t+1}.
\]
This arises from the following reasoning.
We assume $x_1 = a$.
The exponent $n_1$ can be any positive integer, giving a factor $t + t^2 + \cdots = t/(1-t)$.
Next, $n_2$ can also be any positive integer, and $x_2$ can be chosen between $b$ and $b^{-1}$.
This gives a factor $2t/(1-t)$, and so on so forth.
Finally, $x_1$ can also be $a^{-1}$, $b$, or $b^{-1}$, which gives a factor of $4$.

If $k$ is odd and $k \neq 1$, then we have $x_k = x_1$.
So by a similar argument, we have
\[
    \sum_{n=1}^{\infty}
    w_{2n+1} t^{2n+1}
    =
    4\sum_{i=1}^{\infty} \frac{t}{1-t} \left( \frac{2t}{1-t} \right)^{2i-1} \frac{t}{1-t}
    =
    \frac{8t^3}{(1-t)(-3t^2 - 2t + 1)}.
\]
Therefore,
\[
    W(t)
    =
    \frac{8t^2}{-3t^2-2t+1} + 
    \frac{8t^3}{(1-t)(-3t^2 - 2t + 1)} +
    \frac{4t}{1-t}
    =
    \frac{4t-12t^3}{(1-t)(-3t^2 - 2t + 1)}
\]
where the term $1/(1-t)$ corresponds to the case $k = 1$.
This can be rewritten as
\[
    W(t)
    =
    -4 + \frac{2}{1 - t} + \frac{1}{1 + t} + \frac{1}{1 - 3t}
    =
    \sum_{n=1}^{\infty} (2 + (-1)^n + 3^n) \, t^n,
\]
and hence, for any $n \in \ZZ_{\geq 1}$, we have 
\[
    w_n
    =
    2 + (-1)^n + 3^n.
\]

Now, the Möbius inversion formula implies that the number of primitive reduced words of length $n$ is equal to
\[
    \sum_{d \mid n}
    \mu(d) \big( 2+(-1)^{n/d}+3^{n/d} \big),
\]
and therefore, as primitive words of length $n$ and primitive words of length $n$ are in $n$-to-$1$ correspondence, the number of primitive reduced necklaces in $\{ a,b,a^{-1},b^{-1} \}$ is
\begin{equation} \label{eq:pNec}
    \frac{1}{n}
    \sum_{d \mid n}
    \mu(d) \big( 2+(-1)^{n/d}+3^{n/d} \big)
    =
    \begin{cases}
        4 & \text{if } n = 1,2, \\
        \displaystyle \frac{1}{n} \sum_{d \mid n} \mu(d) \, 3^{n/d} & \text{if } n \geq 3
    \end{cases}
\end{equation}
where we have used the arithmetic identities
\[
    \sum_{d \mid n} \mu(d)
    =
    \begin{cases}
        1 & \text{if } n = 1, \\
        0 & \text{if } n \geq 2,
    \end{cases}
    \quad
    \text{and}
    \quad
    \sum_{d \mid n} \mu(d) (-1)^{n/d}
    =
    \begin{cases}
        -1 & \text{if } n = 1, \\
        2 & \text{if } n = 2, \\
        0 & \text{if } n \geq 3. \\
    \end{cases}
\]
Now, \eqref{eq:pNec} implies that (by summing over all factors of $n$) the number of (not necessarily primitive) reduced necklaces of length $n$ is
\[
\frac{3 + (-1)^{n}}{2}
+
\frac{1}{n} \sum_{d \mid n} \varphi(d) \, 3^{n/d}.
\]
This completes the proof.
\end{proof}

\end{document}